\def\R{\mathbb{R}}
\def\H{\mathcal{H}}
\def\sign{\mathrm{sign}}
\newcommand{\dsum}{\displaystyle\sum}
\newtheorem{thm}{Theorem}[section]
\newtheorem{defn}{Definition}[section]
\newtheorem{lem}{Lemma}[section]
\newtheorem{rmk}{Remark}[section]
\let\origmaketitle\maketitle
\def\maketitle{
  \begingroup
  \def\uppercasenonmath##1{} 
  \let\MakeUppercase\relax 
  \origmaketitle
  \endgroup
}
\begin{document}

\title{\large Optimal arrangements of hyperplanes for multiclass classification}

\author[V. Blanco, A. Jap\'on \MakeLowercase{and} J. Puerto]{{\large V\'ictor Blanco$^\dagger$, Alberto Jap\'on$^\ddagger$ and  Justo Puerto$^\ddagger$}\medskip\\
$^\dagger$IEMath-GR, Universidad de Granada\\
$^\ddagger$IMUS, Universidad de Sevilla}

\address{IEMath-GR, Universidad de Granada, SPAIN.}
\email{vblanco@ugr.es}

\address{IMUS, Universidad de Sevilla, SPAIN.}
\email{ajapon1@us.es; puerto@us.es}

\date{}

\maketitle

\begin{abstract}
In this paper, we present a novel approach to construct multiclass classifiers by means of arrangements of hyperplanes. We propose different mixed integer (linear and non linear) programming formulations for the problem  using extensions of widely used measures for misclassifying observations where the \textit{kernel trick} can be adapted to be applicable. Some dimensionality reductions and variable fixing strategies are also developed for these models. An extensive battery of experiments has been run which reveal the powerfulness of our proposal as compared with other previously proposed methodologies.
\keywords{Multiclass Support Vector Machines, Mixed Integer Non Linear Programming, Classification, Hyperplanes}
\subjclass[2010]{62H30, 90C11, 68T05, 32S22.}
\end{abstract}

\section{Introduction}

Support Vector Machine (SVM) is a widely-used methodology in supervised binary classification, firstly proposed by Cortes and Vapnik \cite{cortesvapnik95}.
Given a number of observations with their corresponding labels, the SVM technique consists of finding a strip in the feature space so that each class is included in a different semispace maximizing the separation between classes (in a training sample) and minimizing some measure of the misclassification errors. This problem can be cast within the class of convex optimization and its dual enjoys very good properties. Actually, one can project the original data out onto a higher dimensional space where the separation of the classes can be more adequately performed, and still keeping the same computational effort that was required in the original problem. This fact is the so-called \textit{kernel trick}, and very likely this is one of the reasons that has motivated the successful use of this tool  in a wide range of applications \cite{writing,credit,insurance,cancer,cleveland}.

Most of the SVM literature concentrates on binary classification where several extensions are available. One can use different measures for the separation between classes~\cite{BPR18,IkedaMurata05a,IkedaMurata05b}, select important features~\cite{LMR18}, apply regularization strategies~\cite{LMC18}, etc. However, the analysis of SVM-based methods for datasets with more than two classes has been, from our point of view, only partially investigated. The $k$-label ($k>2$) SVM consists of the following. Given a \textit{training sample} of observations $\{x_1, \ldots, x_n\} \subseteq \R^p$ with their  labels $(y_1, \ldots, y_n) \in \{1, \ldots, k\}^n$, the goal is to construct a decision rule able to classify out-of-sample observations learning from the training sample.

The most common techniques applied to supervised multiclass classification are based on natural extensions of the tools valid for the binary case:  Deep Learning \cite{Agarwal_2018}, $k$-Nearest Neighborhoods \cite{knn1,knn} or Na\"ive Bayes \cite{nb}, among others.

In addition, one can also find some techniques for multiclass classification that take advantage of the SVM methods for binary classification. The most popular multiclass SVM-based approaches are One-Versus-All (OVA) and One-Versus-One (OVO). The former, namely OVA, computes, for each class $r \in \{1, \ldots, k\}$, a binary SVM classifier labeling the observations as $1$, if the observation is in the class $r$ and $-1$ otherwise. The process is repeated for all classes ($k$ times), and then each observation is classified into the class whose constructed hyperplane is the furthest from it in the positive halfspace. In the OVO approach, classes are separated with ${k}\choose{2}$ hyperplanes using one hyperplane for each pair of classes, { where the decision rule comes from a voting strategy in which the most represented class among votes becomes the class predicted}. OVA and OVO inherit most of the good properties of binary SVM. In spite of that,  they are not able to correctly classify datasets where separated clouds of observations may belong to the same class (and thus are given the same label) { when a linear kernel is used}. { Another popular method is the directed acyclic graph SVM (DAGSVM) \cite{DAGSVM}. In this technique,  although the decision rule involves the same hyperplanes built with the OVO approach, it is not given by a unique voting strategy but for a sequential number of voting in which the most unlikely class is removed until only one class remains}. { In addition, apart from OVA and OVO, there are other methods based on decomposing the multiclass problem into several binary classification problems. In particular, in \cite{Allwein,Bakiri}, this decomposition is based on the construction of a coding matrix that determines the pairs of classes that will be used to build the separating hyperplanes.} Alternatively, other methods such as Cramer-Singer (CS)~\cite{cs}, Weston-Watkins (WW)~\cite{ww} or Lee-Lin-Wahba (LLW)~\cite{lee2004}, do not address the classification problem sequentially but as a whole considering all the classes within the same optimization model. Obviously, this seems to be the correct approach. { In particular, in WW, $k$ hyperplanes are used to separate the $k$ classes, each hyperplane separating one class from the others, using $k-1$ misclassification errors for each observation. The same separating idea, is applied in CS but reducing the number of misclassification errors for each observation to a unique value. In LLW, a \textit{sum-to-zero constraint} is used to reduce the dimensionality of the problem. We can also find a quadratic extension based on LLW proposed in \cite{Guermeur}. Finally, in \cite{genSVM}, the authors propose a multiclass SVM-based approach,  \textit{GenSVM}, in which the classification boundaries for a problem with $k$ classes are obtained in a $(k-1)$-dimensional space using a simplex encoding.
Some} of these methods have become popular and are implemented in  most software packages in machine learning as \texttt{e1071} \cite{e1071}, \texttt{scikit-learn} \cite{python} { or \texttt{MSVMpack} \cite{MSVMpack} }. Nevertheless, as far as we are concerned, none of the existing multiclass SVM methods  keeps the essence of binary SVM which stems from finding a globally optimal partition of the feature space.

This paper proposes a novel approach to handle multiclass classification extending the paradigm of binary SVM classifiers. In particular, our method  finds a polyhedral partition of the feature space and an assignment of classes to the \emph{cells} of the partition, by maximizing the separation between classes and minimizing two intuitive misclassification errors. Obviously, as in standard SVM, we can also account in different ways the misclassification errors (hinge or ramp-based losses). For bi-class instances, and using a single separating hyperplane, the method coincides with the standard SVM. Nevertheless, even for 2-classes datasets, new alternatives appear if more than one hyperplane to separate the data is permitted. In particular, our approach allows one to generalize the polyhedral conic classifiers presented in \cite{bagirov}.

Apart from justifying the rationale of our method, we also propose different mathematical programming formulations in order to solve the resulting optimization problems. These formulations belong to the family of Mixed Integer (Linear and Non Linear) Programming (MILP and MINLP) problems, in which the nonlinearities come from the representation of the Euclidean distance margin between classes, that can be modeled as a set of second order cone constraints \cite{BPH14}. This type of constraints can be handled nowadays by any of the most popular off-the-shelf optimization solvers (CPLEX, Gurobi, XPress, SCIP, ...).

These models also have a combinatorial nature induced by the correct allocation of labels to cells. Therefore, they require using some binary variables. This approach is not new and recently, a few attempts have been proposed for different classification problems using discrete optimization tools. For instance, in \cite{uney-turkay} the authors construct classification hyperboxes for multiclass classification, in \cite{bennet-demiriz} the authors provide formulations for SVM with unlabelled data (semi-supervised SVM), and in \cite{ghaddar18,LMR18,labbe14}  mixed integer linear programming tools are provided for feature selection in SVM. Handling a large number of binary variables in the models may become an inconvenient when trying to compute classifiers for medium to large size instances. This inconvenience is alleviated with some preprocessing and dimensionality reduction techniques that are also introduced.

In case the data are, by nature, nonlinearly separable, in classical SVM one can apply the so-called kernel trick to project the data out onto a higher dimensional space where the linear separation has a better performance. The key point is that one does not need to know neither the dimension of the final space nor the  specific transformation that is applied to the data: the resulting mathematical programming problem is in the same space as the original one. Here, we show that the kernel trick can be extended to our framework and therefore, it also allows us to find nonlinear classifiers with this methodology.

To asses the validity of our method we have performed a battery of computational tests on two different families of data. We have tested our method against some well-known multiclass SVM classifiers (OVO, CS, WW and LLW) on 6 databases from the UCI repository. Moreover, we also report results on synthetic datasets specially tailored to capture the difficulty of multiclass supervised classification. In all cases, our methods give results similar or superior to those provided for the other methods. In particular, for the synthetic data instances the improvement in accuracy on the test samples are remarkable (see Table \ref{table4}).

The rest of the paper is organized as follows. In sections \ref{sec:1} and \ref{sec:formulation} we describe and set up the elements of the problem to be considered. Afterward,  we introduce a MINLP formulation for our model. Alternatively,  we also present a linear version, which is obtained whenever we measure the margins with the $\ell_1$-norm. A discussion on the extension, with very few modifications, of the previous models to the Ramp Loss versions is included as well. In Subsection \ref{subsec:kernel} we show how an analogous to the kernel trick can be extended to be applied in this model. Section \ref{sec:heuristic} describes some heuristic strategies, preprocessing and dimensionality reductions to obtain good quality initial solutions of the MINLP. Finally, in section \ref{sec:experiments} we report our computational results on different real and synthetic datasets, and compare our method with the most classical ones for multiclass SVM.

\section{Multiclass Support Vector Machines}\label{sec:1}

In this section, we introduce the problem under study and set the notation used through this paper.

Given a training sample $\{(x_1,y_1), \ldots, (x_n,y_n)\} \subseteq \R^p \times \{1, \ldots, k\}$ the goal of supervised classification is to find a decision rule to assign labels ($y$) to data ($x$), in order to be applied to out-of-sample data. We assume that a given number, $m$, of hyperplanes in $\R^{p}$ have to be built to obtain a subdivision of this space into full dimension polyhedral regions that we shall denote as \textit{cells}. (Here, we would like to mention that the term \textit{cell} stands for a nonempty intersection of the semispaces induced by the hyperplanes in the considered family). Let us denote by $\H_1, \ldots, \H_m$ the hyperplanes to be found, which are in the form $\H_r=\{z \in \R^p: \omega_r^t z + \omega_{r0} =0\}$ for $r=1, \ldots, m$ { (here $v^t$ stands for the transpose operator applied to the vector $v\in \R^p$)}.  Each cell induced with such an arrangement of hyperplanes will be then assigned to a label in $\{1, \ldots, k\}$. In Figure \ref{fig0} we illustrate a subdivision of $\mathbb{R}^2$ induced by 2 hyperplanes and the labels assigned to each cell. In the left figure, we represent the observations, highlighting the classes with different symbols (stars, circles and squares). In the right figure, two hyperplanes which induce 4 cells are constructed to separate the three classes. Each cell is assigned to a class (north $\rightarrow$ circles,  south $\rightarrow$ stars, east $\rightarrow$ stars and west $\rightarrow$ squares). In this example the subdivision in cells  and the assignment of labels reaches a perfect classification on  the given observations.

\usetikzlibrary{shapes.geometric}
\tikzset{
    WNode/.style={circle, fill=black,draw,minimum size=4pt,inner sep=0pt,},
    BNode/.style={rectangle,fill=black,thick,draw,inner sep=0pt,minimum size=4pt},
    YNode/.style={star,fill=black,thick,draw,inner sep=0pt,minimum size=4pt},
    WNode1/.style={circle,draw,black,minimum size=4pt,inner sep=1pt,},
  }
  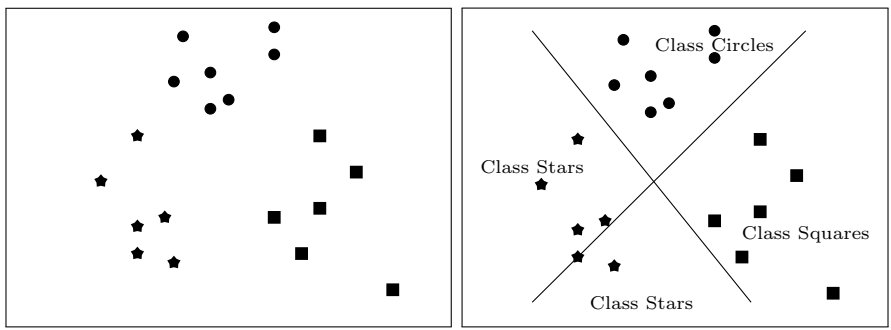
\begin{figure}[h]
\begin{center}  \fbox{\begin{tikzpicture}[
        scale=1.2,
        important line/.style={thick}, dashed line/.style={dashed, thin},
        every node/.style={color=black},
    ]

    \foreach \Point in {(.9,2.4), (1.3,2.5), (1.3,2.1), (2,3), (1,2.9), (1.5, 2.2), (2,2.7)}{
      \draw \Point node[WNode]{};
    }

    \foreach \Point in {(2.9,1.4), (2.3,.5), (3.3,.1), (2,0.9), (2.5,1), (2.5, 1.8)}{
      \draw \Point node[BNode]{};
    }

        \foreach \Point in {(0.9,0.4), (0.5,.5), (0.1,1.3), (0.8,0.9), (0.5,1.8), (0.5, 0.8)}{
      \draw \Point node[YNode]{};
    }

    \draw (3.5,2.9) node {\color{white} \phantom{XX}};
    \draw (-0.5,0) node {\color{white} \phantom{XX}};
  \end{tikzpicture}}~~\fbox{\begin{tikzpicture}[
        scale=1.2,
        important line/.style={thick}, dashed line/.style={dashed, thin},
        every node/.style={color=black},
    ]

    \foreach \Point in {(.9,2.4), (1.3,2.5), (1.3,2.1), (2,3), (1,2.9), (1.5, 2.2), (2,2.7)}{
      \draw \Point node[WNode]{};
    }

    \foreach \Point in {(2.9,1.4), (2.3,.5), (3.3,.1), (2,0.9), (2.5,1), (2.5, 1.8)}{
      \draw \Point node[BNode]{};
    }

        \foreach \Point in {(0.9,0.4), (0.5,.5), (0.1,1.3), (0.8,0.9), (0.5,1.8), (0.5, 0.8)}{
      \draw \Point node[YNode]{};
    }

    \draw (0,1.5) node  {\tiny Class Stars};
    \draw (1.2,0) node   {\tiny Class Stars};

   \draw (3,0.75) node   {\tiny Class Squares};

   \draw (2,3.05) node   {\tiny };
   \draw (2,2.85) node   {\tiny Class Circles};
    \draw (0,0)--(3,3);

        \draw (0,3)--(2.4,0);
  \end{tikzpicture}}
\end{center}
\caption{Illustration of a subdivision induced by 2 hyperplanes in $\mathbb{R}^2$.\label{fig0}}
\end{figure}

From the above, we would like to construct an arrangement of $m$ hyperplanes, $\mathbb{H}=\{\H_1, \ldots, \H_m\}$,  determined by $\omega_1, \ldots, \omega_m \in \R^{p+1}$ (the first component of each vector accounts for the intercept) and a decision rule that assigns a single label to each one of the cells in the subdivision of the space induced by such an arrangement. We would like to point out that each cell in the subdivision can be univocally  identified with a $\{-1,+1\}$-vector in $\R^m$: the $\ell$-component of that vector represents the side (positive or negative) with respect to the hyperplane $\H_\ell$ where that cell lies in.

\begin{defn}[Suitable Assignment]
Given a subdivision $\mathcal{C}$ of $\mathbb{R}^p$ into cells induced by the arrangement of hyperplanes $\mathbb{H}=\{\H_1, \ldots, \H_m\}$ in $\R^p$, a function  $g: \{-1,1\}^m \rightarrow \{1, \ldots, k\}$ is said a \emph{suitable assignment}, if  $g$ univocally maps cells (equivalently, sign-patterns) to labels in $\{1, \ldots, k\}$.
\end{defn}

Observe that a suitable assignment, $g$, allows us to classify any observation $x \in \R^p$ within the set of classes $\{1, \ldots, k\}$, as follows:
\begin{enumerate}
\item Identify $x$ with a sign-pattern:  $\mathrm{s}(x)=(\mathrm{s}_1(x), \ldots, \mathrm{s}_m(x)) \in \{-1,+1\}^m$, where $\mathrm{s}_r(x) = \sign(\omega_r^t x +\omega_{r0})$ for $r=1,\ldots, m$.
\item Apply the function $g$ to the sign-patterns: $\widehat{y}(x) = g(\mathrm{s}(x)) \in \{1, \ldots, k\}$, is the \textit{predicted} label of $x$.
\end{enumerate}

The quality of the decision rule is based, on comparing predictions and actual labels on a training sample, but also on maximally separating the classes in order to find good predictions and avoid undesired overfitting.

In binary classification datasets, SVM is a particular case of our approach if $m=1$, i.e., a single hyperplane to subdivide the feature space is used. In such a case, signs are in $\{-1,1\}$ and classes in $\{1,2\}$, so whenever there are observations in both classes, the assignment is one-to-one. However, even for biclass instances, if more than one hyperplane is used, one may find better classifiers { (we illustrate this behavior with the dataset \texttt{2C4N} of our computational experiments in Table \ref{table4}).}
In Figure \ref{fig:a}, left-and-right, we draw the same dataset of labeled (red and blue) observations and the result of applying a standard SVM (left) and our method with $2$ hyperplanes. In that picture one may see that not only the misclassification errors are smaller with two hyperplanes, as expected, but also the separation between classes is larger, improving the predictive power of the classifier.

\begin{figure}[h]
\begin{center}
\includegraphics[scale=0.26]{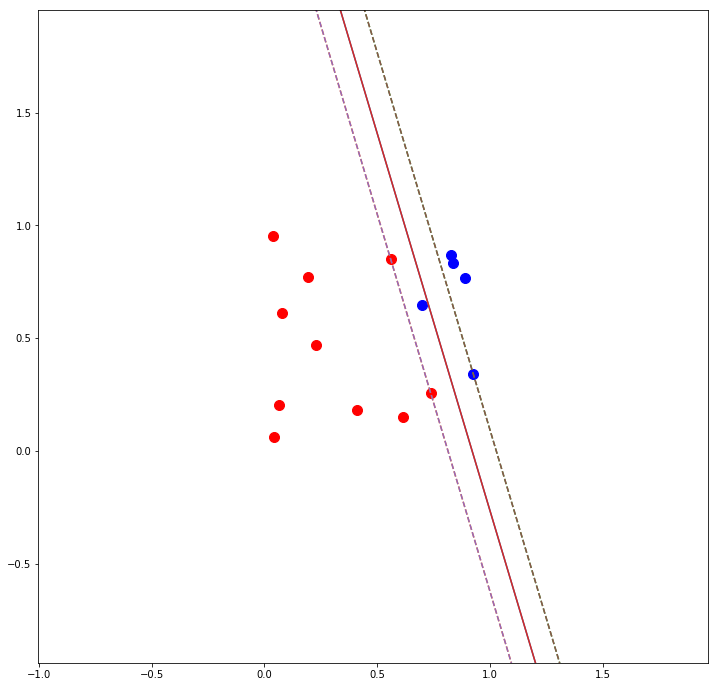}~\includegraphics[scale=0.26]{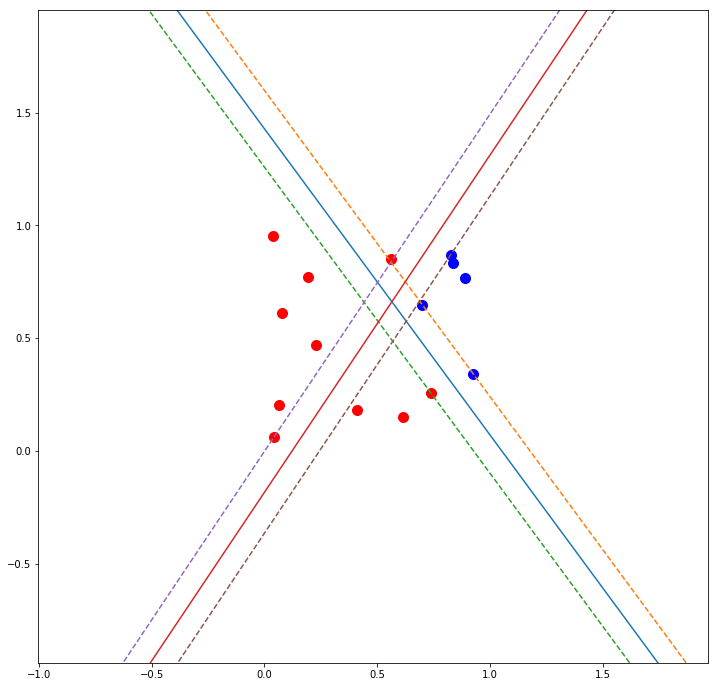}
\caption{Standard SVM (left) and our approach with $2$ hyperplanes (right).\label{fig:a}}
\end{center}
\end{figure}
The rationale of our approach is particularly adequate for datasets in which there are several separated ``clouds'' of observations that belong to the same class. In Figure \ref{fig:JJW2}, we show two different instances in which, again, the colors indicate the class of the observations. The classes in both instances cannot be appropriately separated using any of the available linear SVM-based methods in the literature { since they are based on subdividing the space on class-connected regions}. However,  we are able to perfectly separate the classes using 5 hyperplanes.
\begin{figure}[h]
\begin{center}
\includegraphics[scale=0.28]{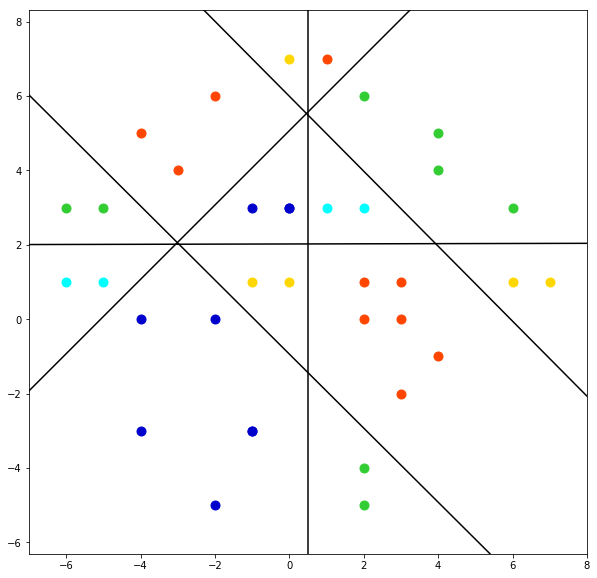}~\includegraphics[scale=0.28]{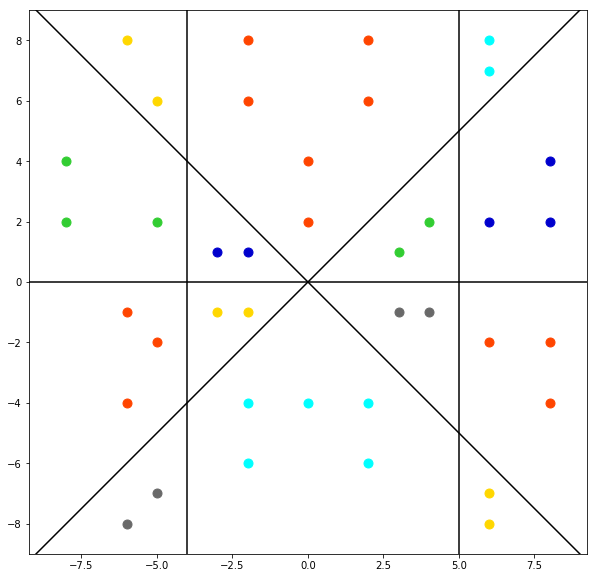}
\caption{A $5$-classes instance classified with our approach using $5$ hyperplanes (left) and a $6$-classes instance classified with our approach using $5$ hyperplanes (right).\label{fig:JJW2}}
\end{center}
\end{figure}

In Figure \ref{fig:AAL1} we compare our approach and the One-versus-One (OVO) approach in an instance with $24$ observations. In the left figure we show the result of separating the classes with four hyperplanes, reaching a perfect classification on the training sample. In the right figure we show the best linear OVO classifier, in which only $66\%$ of the data were correctly classified. We would like also to highlight that, although nonlinear SVM-approaches may separate the data more conveniently, our approach may help to avoid using kernels and ease the interpretation of the results.

\begin{figure}[h]
\begin{center}
\includegraphics[scale=0.1]{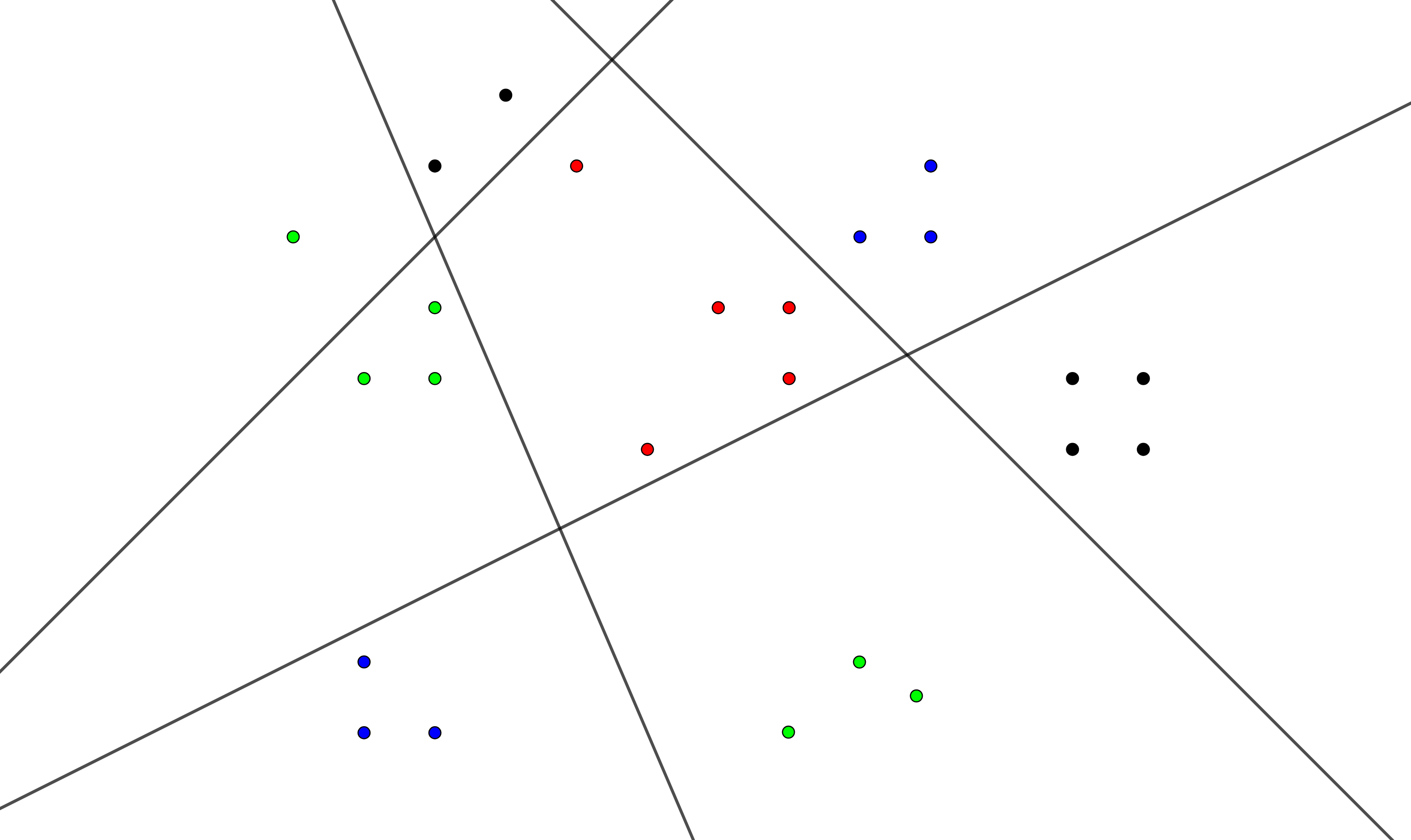}~\includegraphics[scale=0.1]{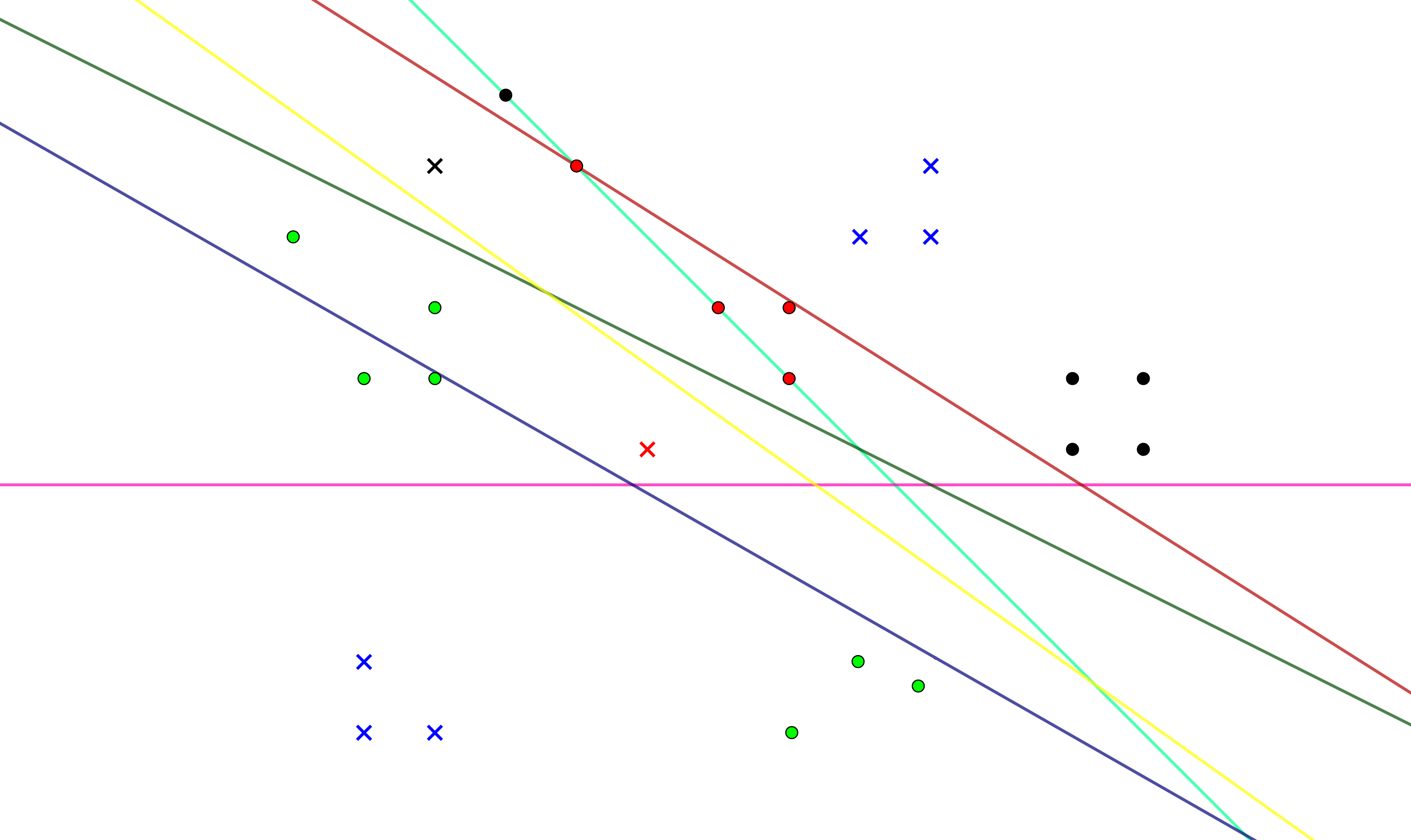}
\caption{A $4$-classes instance classified with our approach using $4$ hypeplanes (left) and the same intance classified using the OVO SVM approach (right).\label{fig:AAL1}}
\end{center}
\end{figure}

Different alternatives could be admissible to justify the rationale of the multiclass classifiers in our framework. To simplify the presentation, we will concentrate on two different models which share the same paradigm but differ in the way they account for misclassification errors. Recall that in SVM-based methods, two criteria are \textit{simultaneously} optimized when constructing a classifier. On the one hand,  a measure of the quality of the decision rule on out-of-sample observations, based on finding a maximum separation between classes; and on the other hand a measure of the misclassification errors for the training set of observations. Both criteria are adequately weighted in order to find a good compromise between the two goals.

In what follows we describe how similar measures can be defined in our multiclass classification framework and the way we account them for.

\subsection{Separation between classes \label{ssec:sbc}}

Separation between classes will be measured as it is usual in SVM-based methods. Let $(\omega_1;\omega_{10}), \ldots, (\omega_m; \omega_{m0}) \in \R^p \times \R$ be the coefficients and intercepts of a set of hyperplanes. The distance { induced by a norm $\|\cdot\|$} between the shifted hyperplanes $\H_r^+=\{z\in \R^p: \omega_{r}^t z + \omega_{r0}=1\}$ and $\H_r^-=\{z\in \R^p: \omega_{r}^t z + \omega_{r0}=-1\}$ is given by $\frac{2}{\|\omega_{r}\|^*}$, where $\|\cdot\|$ is a given norm in $\R^p$ { and $\|\cdot\|^*$ is its dual norm }(see \cite{mangasarian}). { Unless explicitly mentioned, we will consider that $\|\cdot\|$ is the Euclidean norm which dual is also the Euclidean norm}. 

Hence, in order to find globally optimal hyperplanes with maximum separation, we maximize the minimum separation between classes, that is $\min \left\{\frac{2}{\|\omega_{1}\|}, \ldots, \frac{2}{\|\omega_{m}\|}\right\}$. This measure will conveniently keep the minimum separation between classes as largest as possible. Observe that finding the maximum min-separation is equivalent to minimize $\max \{\frac{1}{2}\|\omega_{1}\|^2, \ldots, \frac{1}{2}\|\omega_{m}\|^2\}$.  For a given arrangement of hyperplanes, $\mathbb{H}=\{\H_1, \ldots, \H_m\}$, we will denote by $h_H(\H_1, \ldots, \H_m) = \max \{\frac{1}{2}\|\omega_{1}\|^2, \ldots, \frac{1}{2}\|\omega_{m}\|^2\}$.

We note in passing that different criteria could have been used to model the separation between classes. For instance, one may consider to maximize the summation of all separations namely $\sum_{r=1}^m \frac{2}{\|\omega_r\|}$. However, although mathematically possible, this approach does not capture the original concept in classical SVM and we have left it to be developed by the interested reader.

\subsection{Misclassification errors}

The performance of a classifier on the training set is usually measured with some function of the misclassification errors. Classical SVMs with hinge-loss errors use, for non well-classified observations, a penalty proportional to the distance to the side in which they would have been well-classified. Then the overall sum of these errors is minimized. We extend the notion of hinge-loss errors to the multiclass setting as follows.

Let  $\mathbb{H}=\{\H_1, \ldots, \H_m\}$ be an arrangement of hyperplanes and $(x, y)$ a pair observation ($x$), label ($y$), with ${\rm s}(x) = (\mathrm{s}_1(x),  \ldots, \mathrm{s}_m(x))$ being the sign-pattern of $x$ with respect to the hyperplanes in $\mathbb{H}$. Let $g: \{-1,1\}^m \rightarrow \{1, \ldots, k\}$ be a suitable assignment. We denote by ${\rm t}(x) = ({\rm t}_1(x), \ldots, {\rm t}_m(x))$ the signs of the closest cell to $x$ whose class by $g$ is $y$. We will say that $(x, y)$ is \textit{wrong-classified} with respect to $\H_r$ if ${\rm s}_r(x) \neq {\rm t}_r(x)$, otherwise it is said that $(x,y)$ is \emph{well-classified}.

In what follows we describe the different error measures (misclassification errors due to different causes) that will be considered for $x$ in order to construct an optimal decision rule.

\begin{defn}[Multiclass In-Margin Hinge-Loss]
The multiclass in-margin hinge-loss for $(x,y)$ with respect to the hyperplane $\H_r$ is given as:
$$
h_{I}\Big(x,y,\H_r\Big) =  \left\{\begin{array}{cl}\max\{0, 1- \mathrm{s}_r(x) \cdot (\omega_{r}^t x + \omega_{r0})\} &  \mbox{if $x$ is well classified through  $\H_r$,}\\
0  & \mbox{otherwise.}
\end{array}\right.
$$
\end{defn}

Observe that $h_{I}$ models the error due to observations that although adequately classified with respect to $\H_r$, belong to the margin between the shifted hyperplanes $\H_r^+$ and $\H_r^-$. These errors will be zero if the observation is wrong-classified, or if it is well-classified and does not  belong to the margin induced by the $r$-th hyperplane.

\begin{defn}[Multiclass Out-Margin Hinge-Loss]
The multiclass out-margin hinge-loss for $(\bar x, \bar y)$ with respect to the hyperplane $\H_r$ is given as:
$$
h_{O}\Big((x,y,\H_r\Big) = \left\{\begin{array}{cl}
1 - \mathrm{t}_r(x) \cdot (\omega_r^t x + \omega_{r0})  &  \mbox{if $x$ is not well classified through  $\H_r$,}\\
0 & \mbox{otherwise.}
\end{array}\right.
$$
\end{defn}

$h_{O}$ measures, for wrong-classified observations, how far is from being well-classified.  This error is zero whenever an observation is well-classified. Note that if an observation, besides being wrong-classified, belongs to the margin between $\H_r^+$ and $\H_r^-$, then only $h_O$ should be accounted for. In Figure \ref{fig2} we illustrate the differences between the two types of losses.

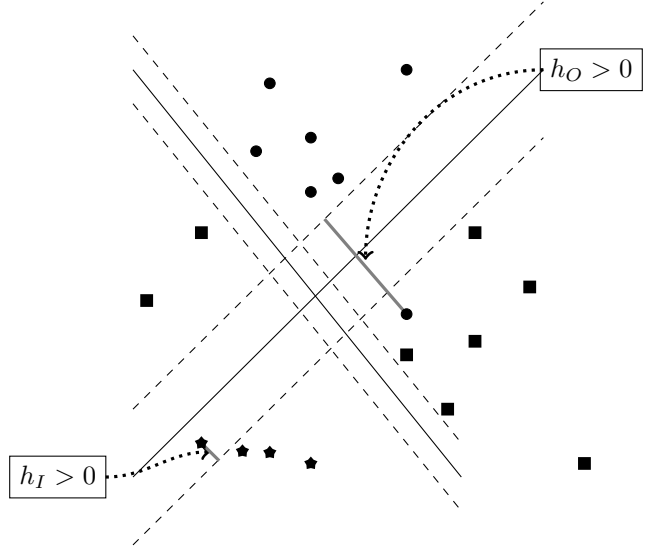
\begin{figure}[h]
\begin{center}
\begin{tikzpicture}[
        scale=1.8,
        important line/.style={thick}, dashed line/.style={dashed, thin},
        every node/.style={color=black},
    ]

    \draw (0,0)--(3,3);
    \draw (0,3)--(2.4,0);

    \draw[dashed] (0,0.5)--(3,3.5);
    \draw[dashed] (0,-0.5)--(3,2.5);

        \draw[dashed] (0,3.25)--(2.4,0.25);
    \draw[dashed] (0,2.75)--(2.4,-0.25);

        \draw [very thick,->, dotted] (-0.2,0) to [out=0,in=180] (0.565,0.185);
        \node[draw] at (-0.55,0) {\small $h_I>0$};

        \draw[very thick,gray] (0.5,0.25) -- (0.63,0.12);

           \draw [very thick,->, dotted] (3,3) to [out=180,in=90] (1.7,1.6);
        \node[draw] at (3.35,3) {\small $h_O>0$};

        \draw[very thick,gray] (2,1.2) -- (1.4,1.9);

            \foreach \Point in {(.9,2.4), (1.3,2.5), (1.3,2.1), (2,3), (1,2.9), (1.5, 2.2), (2,1.2)}{
      \draw \Point node[WNode]{};
    }

    \foreach \Point in {(2.9,1.4), (2.3,.5), (3.3,.1), (2,0.9), (2.5,1), (2.5, 1.8),(0.5,1.8), (0.1,1.3)}{
      \draw \Point node[BNode]{};
    }

        \foreach \Point in {(1.3,0.1), (0.5,.25), (0.8,0.19),  (1, 0.18)}{
     \draw \Point node[YNode]{};
    }
  \end{tikzpicture}
\end{center}
\caption{Illustration of the error measures considered in our approach.\label{fig2}}
\end{figure}

\section{Mixed Integer Non Linear Programming Formulations}\label{sec:formulation}

In this section we describe the two mathematical optimization models that we propose for the multiclass classification problem.  Using the notation introduced in previous sections, the problem can be mathematically stated as follows:
\begin{align}
\min &\; h_H(\H_1, \ldots, \H_m) + C_1 \dsum_{i=1}^n \dsum_{r=1}^m h_{I}\Big(x_i,y_i,\H_r\Big) +  C_2 \dsum_{i=1}^n \dsum_{r=1}^m h_{O}\Big(x_i,y_i,\H_r\Big)\label{m0}\\
\mbox{s.t. } & \mathcal{H}_r \mbox{ is a hyperplane in $\R^p$, for } r=1, \ldots, m.\nonumber
\end{align}

$C_1$ and $C_2$ are parameters which model the \emph{cost} of misclassified and strip-related errors. Usually these constants will be considered equal, nevertheless, in practice analyzing different values for them might lead to better results on predictions. A case of interest results considering $C_2=mC_1$, { i.e., the unitary cost of misclassification errors caused by out-margin observations is $m$ times the unitary cost caused by in-margin observations, giving a larger penalty to wrongly classified observations, avoiding the calibration of a larger number of parameters.}

Observe that the problem above consists of finding the arrangement of hyperplanes minimizing a combination of the three quality measures described in the previous section: 1) the maximum margin between classes, 2) the overall sums of the in-margin errors and 3) the out-margin misclassification errors. In what follows, we describe how the above problem can be re-written as a mixed integer non linear programming problem by means of adequate decision variables and constraints. Furthermore, the proposed model will consist of a set of continuous and binary variables, a linear objective function, and a set of linear and second order cone constraints. It will allow us to push the model to a commercial solver in order to easily solve, at least, small to medium instances.

First, we describe the variables and constraints needed to model the first term in the objective function. We consider the continuous variables $\omega_r \in \R^p$ and $\omega_{r0} \in \R$ to represent the coefficients and intercept of hyperplane $\H_r$, for $r=1, \ldots, m$. Since there is no distinction between hyperplanes, we can assume, without loss of generality that they are non-decreasingly sorted with respect to the norms of their coefficients, i.e., $\|\omega_1\|\geq \|\omega_2\| \geq \cdots \geq \|\omega_m\|$. Then, it is straightforward to see that the term $h_H(\H_1, \ldots, \H_m)$ can be replaced in the objective function by $\frac{1}{2}\|\omega_1\|^2$, once the following set of constraints is included in the model:
\begin{align}
\frac{1}{2}\|\omega_{r-1}\|^2 \geq \frac{1}{2}\|\omega_r\|^2, \forall r=2, \ldots, m.\label{q1}
\end{align}

As already applied in multivariate linear regression~\cite{BPS18} or binary SVM~\cite{BPR18}, other norms can also be used to measure the margin.

For the second term, the in-margin misclassification error,  $h_{I}\Big(x_i,y_i,\H_r\Big)$, corresponding to the observation $(x_i,y_i)$ will be identified with the continuous variable $e_{ir}\geq 0$, for $i=1, \ldots, n$, $r=1, \ldots, m$. Observe that to properly determine each of these errors, one has to determine whether the observation $x_i$ is well-classified or not with respect to the $r$th hyperplane.  In order to do that we need to introduce some binary variables. First, we consider the following two sets of binary variables:
$$
t_{ir}= \left\{\begin{array}{cl} 1 & \mbox{if $\omega_r^t x_i+\omega_{r0} \geq 0$,}\\
0 & \mbox{otherwise.}\end{array}\right. \quad \mbox{ and } \quad z_{is}= \left\{\begin{array}{cl} 1 & \mbox{if $i$ is assigned to class $s$,}\\
0 & \mbox{otherwise.}\end{array}\right.
$$
for $i=1, \ldots, n$, $r=1, \ldots, m$, $s=1, \ldots, k$.  The $t$-variables model the sign-pattern of the observations, while the $z$-variables give the allocation profile of observations to classes. As mentioned above, the classification rule is based on assigning sign-patterns to classes.

The adequate definition of the $t$-variables is assured with the following constraints:
\begin{align}
&\omega_{r}^tx_i + w_{r_0} \geq - T (1-t_{ir}), &\forall i\in N, r \in M\label{t+}\\
& \omega_{r}^tx_i + w_{r_0}  \leq T t_{ir} &\forall i\in N, r \in M\label{t-}
\end{align}
where $T$ is a big enough constant. Observe that $T$ can be accurately estimated based on the data set under consideration.

The following constraints assure the adequate relationships between the variables:
\begin{align}
&\dsum_{s=1}^k z_{is}=1, &\forall i\in N,\label{q2}\\
& \|z_i - z_j\|_1 \leq 2 \|t_i-t_j\|_1, &\forall i, j\in N,\label{q3}
\end{align}
Observe that \eqref{q2} enforce that a single class is assigned to each observation while \eqref{q3} assure that the assignments of two observations must coincide if their sign-patterns are the same. Additionally, the set of $z$-variables  determines whether an observation is well-classified. Indeed, let $\delta_{i}\in \{0,1\}^k$ be defined as $\delta_{is}=1$ if $y_i=s$ and $0$ otherwise. (Observe that $\delta_i$ is the binary encoding of the class of the $i$th observation.) Then, $\xi_i = \frac{1}{2}  \|z_i-\delta_{i}\|_1\in \{0,1\}$ assumes the value zero if and only if the observation $i$ is well-classified, { i.e.,
$$
\xi_i= \left\{\begin{array}{cl} 1 & \mbox{if $i$ is well-classified,}\\
0 & \mbox{otherwise.}\end{array}\right.
$$}

Now, we will model whether the $i$th observation is well-classified or not, with respect to the $r$th hyperplane. Observe that the measure of how far is a wrong-classified observation from being well-classified, needs a further analysis. One may has a wrong-classified observation and several training observations in its same class. We assume that the error for this observation is the misclassification error with respect to the closest cell for which there are well-classified observations in its class. Thus, we need to model the decision on the well-classified \emph{representative} observation for a wrong-classified observation. In Figure \ref{fig:error}, we illustrate this type of misclassification errors. The observation $x_i$ is wrong-classified but the misclassification error of $x_i$, in case $x_j$ is chosen as its representative (well-classified) observation, is $0$ with respect to hyperplane $\mathcal{H}_1$ (note that both $x_i$ and $x_j$ are in the same side of $\mathcal{H}_1$), whereas the misclassification error with respect to $\mathcal{H}_2$ is $h$. Observe $h$ is the distance between $x_i$ and the shifted hyperplane defining the halfspace where $x_j$ lies in. We consider the following set of binary variables:
$$
h_{ij} = \left\{\begin{array}{cl}
1 & \mbox{if $x_j$, which is well classified and verifies $y_j=y_i$, is the representative}\\
 &\mbox{ of $x_i$ in its closest cell  through hyperplanes,}\\
0 & \mbox{otherwise}\end{array}\right.
$$
These variables require to impose the following constraints:
\begin{align}
& \dsum_{j \in N:\atop y_i=y_j} h_{ij}=1, &\forall i\in N,\label{q6}\\
& \xi_j+h_{ij} \leq 1 &\forall i, j\in N (y_i=y_j),\label{q7}\\
& h_{ii} = 1- \xi_i  &\forall i\in N,\label{q7a}
\end{align}
The first set of constraints, \eqref{q6}, impose a single assignment between observations belonging to the same class. Constraints \eqref{q7} avoid choosing wrong-classified representative observations. The set of constraints \eqref{q7a} enforces well-classified observations to be represented by themselves.

\begin{figure}[h]
\begin{center}
\begin{tikzpicture}[
        scale=1.5,
        important line/.style={thick}, dashed line/.style={dashed, thin},
        every node/.style={color=black},
    ]
        \draw[very thick,gray] (2,1.2) -- node[left] {$h$} (1.58,0.78);
    \foreach \Point in {(.9,2.4), (1.3,2.5), (1.3,2.1), (2,3), (1,2.9), (1.5, 2.2), (2,1.2)}{
      \draw \Point node[WNode]{};
    }

    \foreach \Point in {(2.9,1.4), (2.3,.5), (3.3,.1), (2,0.9), (2.5,1), (2.5, 1.8),(0.5,1.8), (0.1,1.3)}{
      \draw \Point node[BNode]{};
    }

        \foreach \Point in {(1.3,-0.1), (1.7,.2), (0.8,0.01)}{
     \draw \Point node[WNode]{};
    }

            \foreach \Point in { (1, 0.18)}{
     \draw \Point node[WNode1]{};
    }


    \draw[dashed, very thin] (0,0.5)--(3,3.5);
    \draw[dashed, very thin] (0,-0.5)--(3,2.5);
\node[right] at (0,0.2){$\mathcal{H}_1$};
\node[right] at (0,2.7){$\mathcal{H}_2$};
\node[right] at (1,0.18){$x_j$};
\node[right] at (2,1.2){$x_i$};
        \draw[dashed, very thin] (0,3.25)--(2.4,0.25);
    \draw[dashed, very thin] (0,2.75)--(2.4,-0.25);


  \end{tikzpicture}
\end{center}
\caption{Illustration of the wrong-classification errors.\label{fig:error}}
\end{figure}
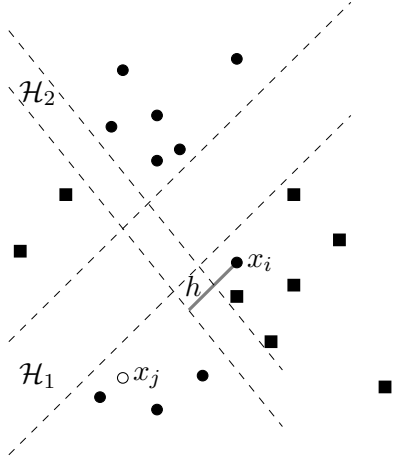

With these variables, we can model the in-margin errors by means of the following constraints:
\begin{align}
&\omega_{r}^t x_i + \omega_{r0} \geq 1-e_{ir}- T\;(3-t_{ir}-t_{jr}-h_{ij}), &\forall r \in M,\label{q4}\\
& \omega_{r}^t x_i +\omega_{r0} \leq -1 +e_{ir}+ T\; (1+t_{ir}+t_{jr}-h_{ij}),& \forall r\in M,\label{q5}
\end{align}
These constraints model, by using the sign-patterns given by $t$, that, $e_{ir} = \max\{0,\min \{1,1- \mathrm{s}_r(x)(\omega_{r}^t x_i + \omega_{r0}\}\}$. Note that the constraints are active if either $t_{ir}=t_{jr}=h_{ij}=1$, i.e., if the well-classified observation $x_j$ is the representative observation for $x_i$ and both are in the positive side of the $r$th-hyperplane; or  $t_{ir}=t_{jr}=0$ and $h_{ij}=1$, i.e.,  if the well-classified observation $x_j$ is the representative observation for $x_i$ and both are in the negative side of the $r$th-hyperplane.  Thus, constraints \eqref{q4} and \eqref{q5} adequately model the in-margin errors for all observations . Furthermore, because of \eqref{t+} and \eqref{t-}, and those described above, the variables $e_{ir}$ always take values smaller than or equal to $1$.

Finally, the third addend, the out-margin errors, will be modeled through the continuous variables $d_{ir}\geq 0$, for $i=1, \ldots, n$, $r=1, \ldots, m$. With the set of variables described above, the out-margin misclassification errors can be adequately modeled through the following constraints:
\begin{align}
& d_{ir} \geq 1- \omega^t_r x_i -\omega_{r0} - T\,(2+t_{ir}-t_{jr}-h_{ij}),& \forall i,j\in N (y_i=y_j), r\in M,\label{q8}\\
& d_{ir} \geq 1+ \omega^t_r x_i + \omega_{r0} - T\,(2-t_{ir}+t_{jr}-h_{ij}),& \forall i,j\in N (y_i=y_j), r\in M,\label{q9}
\end{align}
Constraints \eqref{q8} are active only if $t_{ir}=0$ and $t_{jr}=h_{ij}=1$, that is, if $x_j$ is a well-classified observation in the positive side of $\H_r$, while $x_i$  is wrong-classified in the negative side of $\H_r$ being $x_j$ the representative observation for $x_i$ (note that if $x_i$ is well-classified then $h_{ii}=1$ by \eqref{q7a} and then, the constraint cannot be activated). The second set of constraints, namely \eqref{q9}, can be analogously justified in terms of the negative side of $\H_r$. The main difference of these constraints with respect to \eqref{q4} and \eqref{q5} is that (\ref{q8}) and (\ref{q9}) are active only if $x_i$ is wrong-classified.

According to the above constraints, a misclassified observation $x_i$  is penalized in two ways with respect to each hyperplane $\mathcal{H}_r$. In case that $x_i$ is well-classified with respect to $\mathcal{H}_r$, but it belongs to the margin, then $e_{ir}=1 - \sign(\omega_{r}^t x_i + \omega_{r0}) (\omega_{r}^t x_i + \omega_{r0}) \leq 1$ and $d_{ir}=0$ ($t_{ir}=t_{jr}$). Otherwise, if $x_i$ is wrong-classified with respect to $\mathcal{H}_r$, then  $d_{ir}=1 - \sign(\omega_{r}^t x_i + \omega_{r0}) (\omega_{r}^t x_j + \omega_{r0}) \geq 1$ and $e_{ir}=0$ ($h_{ij}=1$ and $t_{ir}\neq t_{jr}$).

We illustrate the rationale of the proposed constraints on the data drawn in Figure \ref{fig::caso_2}. Observe that A is not correctly classified since it lies within  a cell in which the blue-class is not assigned. Suppose that B, a well-classified observation, is the representative of A ($h_{AB}=1$), then the model would have to penalize two types of errors. The first one with respect to $\H_2$. If we suppose $t_{B2}=1$, then $t_{A2}=0$, leading to an activation on constraint \eqref{q8} being $d_{A2}>0$. On the other hand, even though A is well-classified with respect to $\mathcal{H}_1$, we also have to penalize its margin violation. Again, if we assume $t_{B1}=1$, then $t_{A1}=1$, what would activate the constraint \eqref{q4} being $e_{A1}>0$.
\begin{figure}[h]
\begin{center}
\includegraphics[scale=0.5]{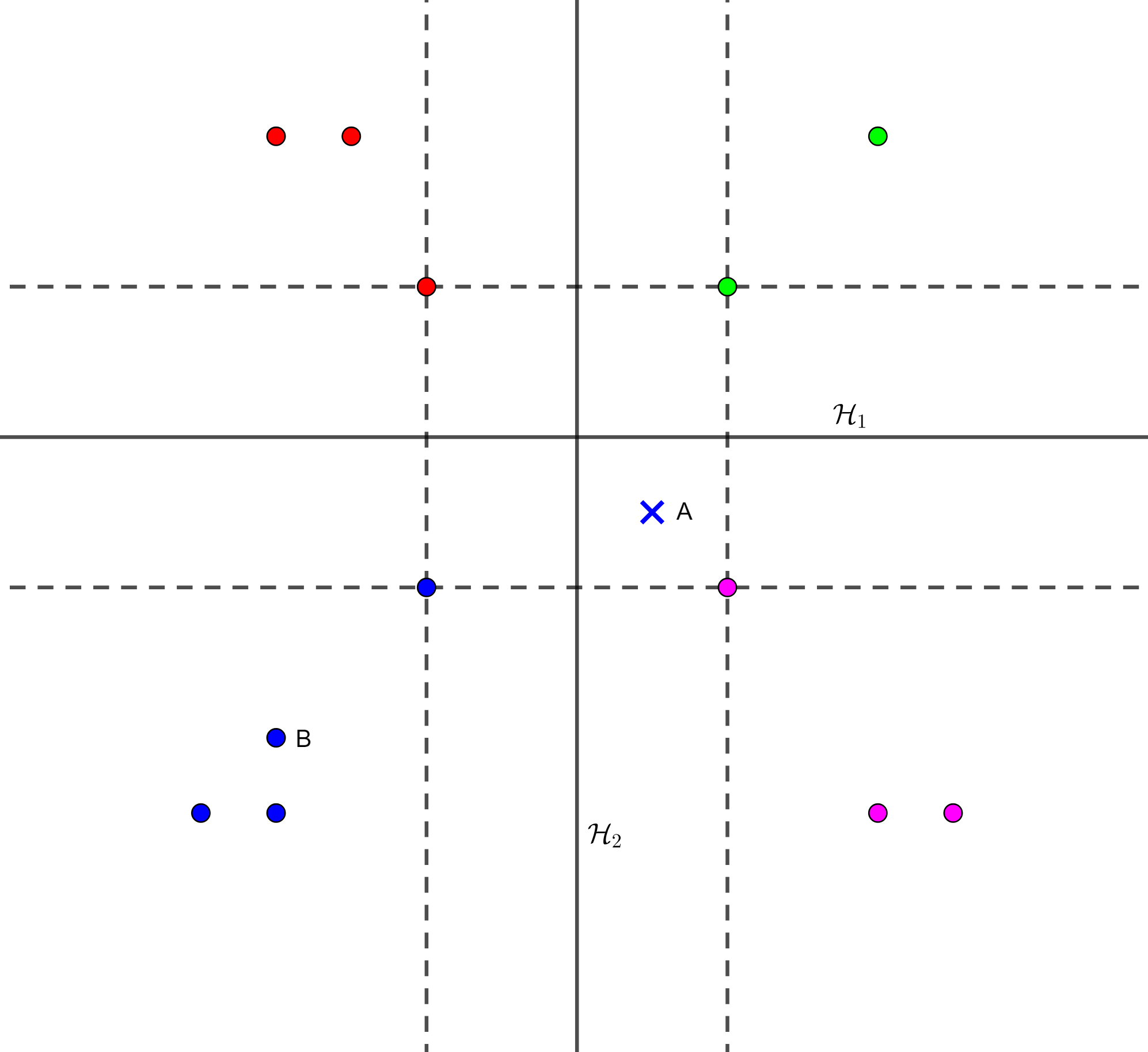}
\end{center}
\caption{Illustration of the in-margin and out-margin constraints of our model.\label{fig::caso_2}}
\end{figure}

The above comments can be summarized in the following mathematical programming formulation for { the problem}:
\begin{align}
\min & \;\; \|\omega_1\|^2+ C_1 \dsum_{i=1}^n \dsum_{r=1}^m e_{ir}+   C_2\dsum_{i=1}^n \dsum_{r=1}^m d_{ir} \label{model}\tag{${\rm MCSVM}$}\\
\mbox{s.t. } &\eqref{q1}-\eqref{q9}\nonumber\\
& \mathbf{\omega}_r \in \R^p, \omega_{r0} \in \R,&\forall r \in M,\nonumber\\
& d_{ir}, e_{ir} \geq 0,  t_{ir} \in \{0,1\}&\forall i\in N, r\in M,\nonumber\\
& h_{ij} \in \{0,1\}, &\forall i, j \in N,\nonumber\\
& z_{is} \in \{0,1\},& \forall i\in N, s\in K,\nonumber\\
&{ \xi_i \in \{0,1\},} & \forall i\in N.\nonumber
\end{align}

\eqref{model} is a mixed integer non linear programming model, whose nonlinear terms come from the norm minimization in the objective function { and constraints \eqref{q1}}, so that they are second order cone representable. { In case one chooses the $\ell_1$-norm instead of the Euclidean norm, the model becomes a mixed integer linear programming problem.} Therefore, the model is suitable to be solved using any of the available commercial solvers, as Gurobi, CPLEX, etc. The main bottleneck of the above formulation relies on the number $O(n^2)$ of binary variables.

\begin{rmk}[Ramp Loss misclassification errors]

An alternative measure of misclassification training errors is the \textit{ramp loss}. The ramp loss version of the model is interesting for certain instances since it allows one to improve the robustness against potential outliers. Instead of using out of margin hinge loss errors $h_{O}$, the ramp-loss measure consists of penalizing wrong-classified observations by a constant, independently on how far they are from being well-classified. Given an observation/label, $(\bar x, \bar y)$, the  ramp-loss with respect to $\mathbb{H}$, is defined as:
$$
{\rm RL}((\bar x, \bar y), \mathbb{H}) = \left\{\begin{array}{cl}
0 & \mbox{if $\bar x$ is well-classified} \\
1 & \mbox{otherwise}
\end{array}\right.
$$
Note that, for the training sample, the ramp-loss is represented in our model through the $\xi$-variables. More specifically, ${\rm RL}((x_i, y_i),\mathbb{H}) = \xi_i$ for all $i\in N$.  In order to do that we just need to introduce the following modifications on the MINLP problem:

\begin{align}
\min & \;\; \|\omega_1\|^2 + C_1\dsum_{i=1}^n \dsum_{r=1}^m e_{ir} +C_2 \dsum_{i=1}^n \xi_i\label{modelRL}\tag{${\rm MCSVM}_{\rm RL}$}\\
\mbox{s.t. } & \eqref{q1}-\eqref{q5}\nonumber\\
& \mathbf{\omega}_r \in \R^p, \omega_{r0} \in \R,& \forall r \in M,\nonumber\\
&  e_{ir} \geq 0, &\forall i\in N, r\in M,\nonumber\\
&h_{ij} \in \{0,1\}, &\forall i, j \in N,\nonumber\\
& z_{is} \in \{0,1\},& \forall i\in N, s\in K,\nonumber\\
& t_{ir} \in \{0,1\},&\forall i\in N, r\in M,\nonumber\\
& \xi_{i} \in \{0,1\}, &\forall i\in N.\nonumber
\end{align}

\end{rmk}

\subsection{Building the classification rule}

Recall that the main goal of multiclass classification is to determine a decision rule such that, given any observation, it is able to assign it a class, i.e., to determine the optimal suitable assignment. Hence, once the solution of \eqref{model} is obtained, the decision rule has to be derived. Given $x\in\R^p$, two different situations are possible: (a) $x$ belongs to a cell with an assigned class; and (b) $x$ belongs to a cell with no training observations inside, so with non assigned class. For the first case, $x$ is assigned to its cell's class. In the second case, different strategies to determine a class for $x$ are possible.

We propose the following assignment rule based on the same allocation methods used in \eqref{model}: observations are assigned to their closest well-classified representatives. More specifically,  let $\mathrm{s}(x)$ be the sign-pattern of $x$ with respect to the optimal arrangement of hyperplanes $\mathbb{H}^*  = \{(\omega_1^*, \omega^*_{10}), \ldots, (\omega_m^*, \omega^*_{m0})\}$ obtained from \eqref{model}, and let $J=\{j \in \{1, \ldots, n\}: \xi_j^*=0\}$ (here $\xi^*$ stand for the optimal vector obtained by solving \eqref{model}). Then, among all the well-classified observations in the training sample, $J$, we assign to $x$ the class of the one whose cell is the \textit{closest} (less separated from $x$). Such a classification of $x$ can be obtained by enumerating all the possible assignments, $O(|J|)$ and computing the distance measure over all of them. Equivalently, one can solve the following mathematical programming problem:

{\begin{align*}
\min  & \dsum_{j \in J}^n \dsum_{r=1\atop s(x_j)_r +\mathrm{s}(x)_r=0,}^{m} \gamma_j \, |(\omega^*_{r})^{t}x +\omega^*_{r0}|\\
\mbox{s.t. } & \dsum_{j \in J}^n \gamma_{j}=1,\\
&\gamma_{j} \in \{0,1\}, \forall j \in J
\end{align*}

\noindent where $\gamma_j=\left\{\begin{array}{cl} 1 & \mbox{if $x$ is assigned to the same cell as $x_j$,}\\
0 & \mbox{otherwise.}
\end{array}\right.$}

The integrality condition in the problem above can be relaxed, since the unique constraint in the problem is totally unimodular and thus, the problem is a linear programming problem. Clearly, the solution of the above problem gives the optimal labelling of $x$ with respect to the existing cells in the arrangement.

One could also consider other robust measures for such an assignment following the same paradigm, as min-max error or the like.

\providecommand{\ds}[0]{\displaystyle}

\subsection{Nonlinear Multiclass Classification}\label{subsec:kernel}

Finally, we analyze a crucial question in any SVM-based methodology, which is whether one can apply the Theory of Kernels in our framework. Using kernels means been able to map the observations (via some transformation $\varphi: \R^p \rightarrow \R^P$) to a higher dimensional space, where the separation of the data is more adequately performed. If the desired transformation, $\varphi$, is known, one could transform the data and solve the problem \eqref{model} with a higher number of variables. However, in binary SVMs, formulating the dual of the classification problem, one can observe that it only depends on the original data via the inner products of each pair of observations (originally in $\R^p$), i.e., through the amounts $x_i^t x_j$ for $i,j=1, \ldots, n$. If the  transformation $\varphi$ is applied to the data, the observations only appear in the (classical SVM) problem as $\varphi(x_i)^t \varphi(x_j)$ for $j=1, \ldots, n$. Thus, kernels are defined as generalized inner products as $K(a,b)=\varphi(a)^t \varphi(b)$ for each $a, b \in \R^p$, and they can be introduced using any of the well-known families of kernel functions (see e.g., \cite{horn}). Moreover, Mercer's theorem gives sufficient conditions for a function $K: \R^p \times \R^p \rightarrow \R$ to be a kernel function (one which is constructed as the inner product of a transformation of the features) what allows one to construct kernel measures that induce transformations. The main advantage of using kernels, apart from a probably better separation in the projected space, is that in binary SVM, the complexity of the \emph{transformed problem is the same as the  original one.} More specifically, the dual problems have the same structure and the same number of variables.

Although problem \eqref{model} is a MINLP, and then, duality results do not hold, one can apply decomposition techniques to separate the binary and the continuous variables and then, iterate over the binary variables by recursively solving certain continuous and easier problems (see e.g. Benders decomposition\cite{benders,geoffrion}...). The following result, whose proof can be found in the extended version of this paper (see \cite{BJP18}), states that our approach also allows us to find nonlinear classifiers via the kernel tools.

{This result is interesting by itself since links the general theory of nonlinear classifiers, very well-known for the standard SVM theory with Euclidean distance, to our multiclass framework. It is worth noting that for a function $h_H(\H_1, \ldots, \H_m) = \sum_{r=1}^m \|\omega_r\|^2$ the usual kernel trick construction applies \textit{mutatis-mutandis}. Nevertheless, as pointed out in Section \ref{ssec:sbc}, we elaborate our approach based on the natural measure of margin that maximizes the minimum separation between classes, namely $h_H(\H_1, \ldots, \H_m) = \max \{\|\omega_1\|^2,\ldots,\|\omega_m\|^2\}$. This change implies that the mathematical development known for the standard kernel trick does not carry over our new approach without a further analysis. We prove below that in this new framework one can also find nonlinear multiclass classifiers that, as in the standard SVM case, only depend on the transformation by  means of inner products of the original data. Hence, extending the kernel trick to this multiclass framework. }

\begin{thm}\label{th:kernels}
Let $\varphi: \R^p \rightarrow \R^P$ be a transformation of the feature space. Then, one can obtain a multiclass classifier which only depends on the original data by means of the inner products $\varphi(x_i)^t \varphi(x_j)$, for $i, j=1, \ldots, n$.
\end{thm}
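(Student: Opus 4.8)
The plan is to exploit the block structure of \eqref{model}. Although it is a MINLP for which no global duality is available, once the binary variables $t$, $z$, $h$ and $\xi$ are \emph{fixed} to a feasible configuration, the problem reduces to a continuous convex second order cone program in $\omega_1,\ldots,\omega_m$, $\omega_{10},\ldots,\omega_{m0}$, $e$ and $d$: the big-$M$ expressions in \eqref{t+}, \eqref{t-}, \eqref{q4}, \eqref{q5}, \eqref{q8} and \eqref{q9} become constants, so every constraint is affine in the intercepts $\omega_{r0}$, the slacks $e_{ir},d_{ir}$, and the scalars $\omega_r^t\varphi(x_i)$, while the only nonlinearity is carried by the margin term $\|\omega_1\|^2$ (equivalently $\max_r\|\omega_r\|^2$). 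Since there are finitely many binary configurations, solving \eqref{model} over the transformed sample $\varphi(x_1),\ldots,\varphi(x_n)$ amounts to solving finitely many such subproblems and keeping the best; thus it suffices to show that each subproblem, together with the resulting classification rule, can be rewritten so that the transformed data enter only through the Gram matrix $\mathcal K=\big(\varphi(x_i)^t\varphi(x_j)\big)_{i,j=1}^n$.

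The core step is a representer-type reduction of a fixed subproblem. I would work with the (equivalent) unsorted form of the model, in which $\|\omega_1\|^2$ together with the ordering constraints \eqref{q1} are replaced by $\max_r\|\omega_r\|^2$, which is legitimate because \eqref{q1} is only a relabelling device. Let $V=\mathrm{span}\{\varphi(x_1),\ldots,\varphi(x_n)\}\subseteq\R^P$ and, for any feasible point, decompose each $\omega_r$ orthogonally as $\omega_r=\omega_r^{\parallel}+\omega_r^{\perp}$ with $\omega_r^{\parallel}\in V$ and $\omega_r^{\perp}\perp V$. Then $\omega_r^t\varphi(x_i)=(\omega_r^{\parallel})^t\varphi(x_i)$ for every $i$, so replacing $\omega_r$ by $\omega_r^{\parallel}$ leaves every constraint unchanged, while $\|\omega_r\|^2=\|\omega_r^{\parallel}\|^2+\|\omega_r^{\perp}\|^2\ge\|\omega_r^{\parallel}\|^2$ shows that the objective does not increase. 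Hence each subproblem attains its optimum at a point with $\omega_r\in V$, i.e. $\omega_r=\dsum_{j=1}^n\lambda_{rj}\varphi(x_j)$ for some coefficients $\lambda_{rj}\in\R$.

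Substituting this representation completes the reduction: $\|\omega_r\|^2=\lambda_r^t\mathcal K\lambda_r$ and $\omega_r^t\varphi(x_i)=(\mathcal K\lambda_r)_i$, so each subproblem becomes a second order cone program in the variables $(\lambda_{rj},\omega_{r0},e_{ir},d_{ir})$ whose coefficients are entries of $\mathcal K$ only. Collecting the best of these subproblems yields optimal coefficients $\lambda^{*}_{rj}$ and a suitable assignment $g$ encoded by the optimal $z$-variables, none of which requires knowing $\varphi$ explicitly. For prediction, a new observation $x$ is identified with the sign pattern $\mathrm{s}_r(x)=\sign\big(\dsum_{j}\lambda^{*}_{rj}K(x_j,x)+\omega^{*}_{r0}\big)$ and, if it lands in an empty cell, classified by the linear program used to build the classification rule, whose data are only the quantities $\dsum_i\lambda^{*}_{ri}K(x_i,x)+\omega^{*}_{r0}$ and $(\mathcal K\lambda^{*}_r)_j+\omega^{*}_{r0}$; in both situations $x$ interacts with the training set solely through the kernel evaluations $K(x_i,x)=\varphi(x_i)^t\varphi(x)$. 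This yields a classifier depending on the original data only via inner products of the $\varphi(x_i)$'s.

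The step I expect to be the main obstacle is making the representer reduction compatible with the non-separable margin term $\max_r\|\omega_r\|^2$ (equivalently, with the ordering constraints \eqref{q1}): one must project \emph{all} the $\omega_r$ onto $V$ simultaneously and argue that this neither creates infeasibility nor increases the objective, which works because orthogonal projection is norm-nonincreasing for each $r$ and $\max$ preserves that monotonicity, and then re-sort the projected hyperplanes, permuting the index $r$ and the associated $t$-variables, to return to a feasible configuration of the sorted model \eqref{model}; this last point is harmless because all configurations are enumerated. The remaining verifications, namely affineness of the constraints in the scalars $\omega_r^t\varphi(x_i)$ once the binaries are fixed and finiteness of the set of configurations, are routine.
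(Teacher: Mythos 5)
Your argument is correct, but it reaches the conclusion by a genuinely different route from the paper. The paper also begins by fixing the binary variables, but then separates the resulting continuous problem into the subproblems \eqref{sp1} and \eqref{spr}, applies Lagrangean duality and the KKT conditions to each, and assembles an explicit dual \eqref{djspf} in which the data appear only through $x_i^t x_j$ and the optimal $\omega_r$ are recovered as $\mu_r^0\omega_r=\sum_i\alpha_{ir}x_i$; this mirrors the classical SVM dual, exposes the support-vector structure of the multipliers, and is tied to the Benders-type decomposition the authors advertise, at the price of fairly delicate bookkeeping (the case $\mu_r^0=0$, the normalization $1-\sum_r\gamma_r\mu_r^0$, strong duality for each subproblem). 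You instead prove a representer-type statement: after passing to the equivalent unsorted objective $\max_r\|\omega_r\|^2$ (which is the right move, since the sorted subproblem with constraints \eqref{q1} kept as such is not convex), you orthogonally project every $\omega_r$ onto $\mathrm{span}\{\varphi(x_1),\ldots,\varphi(x_n)\}$, observe that all constraints involve $\omega_r$ only through $\omega_r^t\varphi(x_i)$ so feasibility is preserved, and that each norm can only decrease, so the monotone aggregation $\max_r$ does not increase; substituting $\omega_r=\sum_j\lambda_{rj}\varphi(x_j)$ then kernelizes both the subproblem and the decision rule (including the empty-cell linear program). This is shorter, more elementary, and avoids duality altogether, yielding a kernelized primal rather than a dual; what it does not give you is the explicit dual formulation and multiplier interpretation that the paper's derivation produces. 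Your closing caveat about re-sorting the projected hyperplanes and permuting the associated index-$r$ variables is the one point that needs saying, and you say it correctly.
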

\begin{proof}
See Appendix \ref{sec:kernels}.
\end{proof}


\section{A Math-Heuristic Algorithm}\label{sec:heuristic}

As mentioned above, the computational burden for solving \eqref{model}, that is a mixed integer non linear programming problem (in which the nonlinearities come from the norm minimization in the objective function), is the combination of the discrete aspects and the non-linearities  in the model. In this section we provide some heuristic strategies that allow us to cut down the computational effort  by fixing some of the variables. It will also provide good-quality initial feasible solutions when solving, exactly, \eqref{model} using a commercial solver.
Two different strategies are provided. The first one consists of applying a variable fixing strategy to reduce the number of $h$-variables in the model. Note that in principle, $n^2$ variables of this type are considered in the model. The second approach consists of fixing to zero some of the $z$-variables. These $nk$ variables allow us to model assignments between observations and classes. The proposed method is a math-heuristic approach, since after applying the adequate dimensionality reductions, Problem \eqref{model} (or \eqref{modelRL}) has to be solved. Also, although our strategies do not ensure any kind of optimality certificate, they produce a very good performance as will be shown in our computational experiments. Observe that when classifying datasets, the measure of the efficiency of a decision rule, as ours, is usually assessed by means of the accuracy of the classification on out-of-sample data, whereas the objective value of the proposed model is just an approximated measure of such an accuracy which cannot be computed only with the training data.

{
\begin{algorithm}[H]

\begin{enumerate}
\item Apply dimensionality reductions test based on algorithms \ref{alg1} and \ref{alg2}.
\item Find an initial solution generating $k$ separating hyperplanes.
\item Solve problem \eqref{model} (or \eqref{modelRL}) up to a prescribed accuracy for the train data.
\end{enumerate}

 \caption{A math-heuristic approach.\label{alg-mathheur}}
\end{algorithm}

In what follows we describe two strategies to reduce the dimensionality of the problem. These approaches are based on applying clustering techniques to the data. The methods are sensible to the number of clusters. For determining this parameter, we run a hierarchical clustering method, using as termination criterion a given squared Euclidean distance between the observations and their centroids.}

\subsection{Reducing the $h$-variables}

Our first strategy comes from the fact  that for a given observation $x_i$, there may be several possible choices for $h_{ij}$ to assume the value one with the same final result. Recall that  $h_{ij}$ could be equal to one whenever $x_j$ is a well-classified observation in the same class as $x_i$. The  errors $e_{ir}$ and $d_{ir}$ are then computed by using the class of $x_j$ but not the observation $x_j$ itself. Thus, if a set of well-classified observations of the same class is close enough,  only one of them can be the representative element of the group. In order to illustrate the procedure, we show in Figure \ref{fig:ex1} (left) a $4$-classes and $24$-points instance in which the classes are easily identified by applying any clustering strategy. In such a case \eqref{model} has ($24\times 24 =$) $576$ $h$-variables, but if we allow $h$ only to take value $1$ at a single point in each cluster, we obtain the same result but reducing to  $144$ ($24 \times 6 + 18$, where the 18 comes from the observation mentioned in the formulation in which each well-classified observation can be a representative element of itself) the number of variables. In Figure \ref{fig:ex1} (right),  we show the some clusters based on the data, and a (random) selection of a unique point at each cluster for which the $h$-values are allowed to be one.

\begin{figure}[h]
\begin{center}
\includegraphics[scale=0.3]{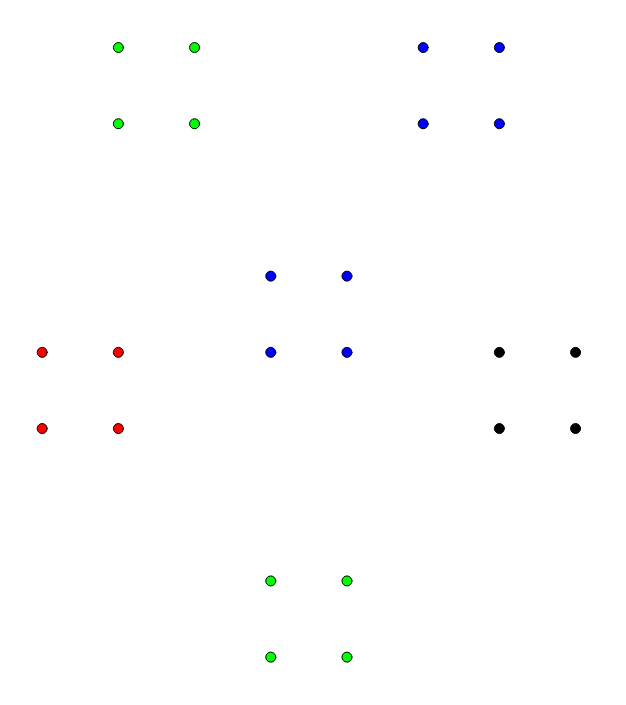} \hspace*{1cm} \includegraphics[scale=0.3]{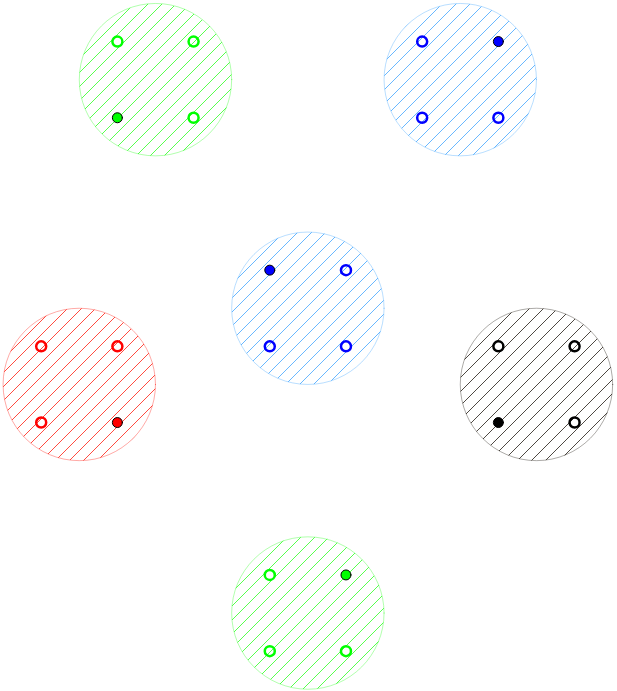}
\end{center}
\caption{Clustering observations for reducing $h$-variables.\label{fig:ex1}}
\end{figure}

This strategy is summarized in Algorithm \ref{alg1}.

\begin{algorithm}[H]

\begin{enumerate}
\item Cluster the dataset by \textit{approximated} classes: $\mathcal{C}_1, \ldots, \mathcal{C}_c$.
\item Randomly choose a single point at each cluster, $x_{i_j} \in \mathcal{C}_j$, for $j =1, \ldots, c$.
\item Set $h_{ij}=0$ for $j \not\in \{i_1, \ldots, i_c\}$.
\end{enumerate}

 \caption{Strategy to reduce $h$-variables.\label{alg1}}
\end{algorithm}

\subsection{Reducing the $z$-variables}

%
\begin{figure}[h]
\begin{center}
\includegraphics[scale=0.28]{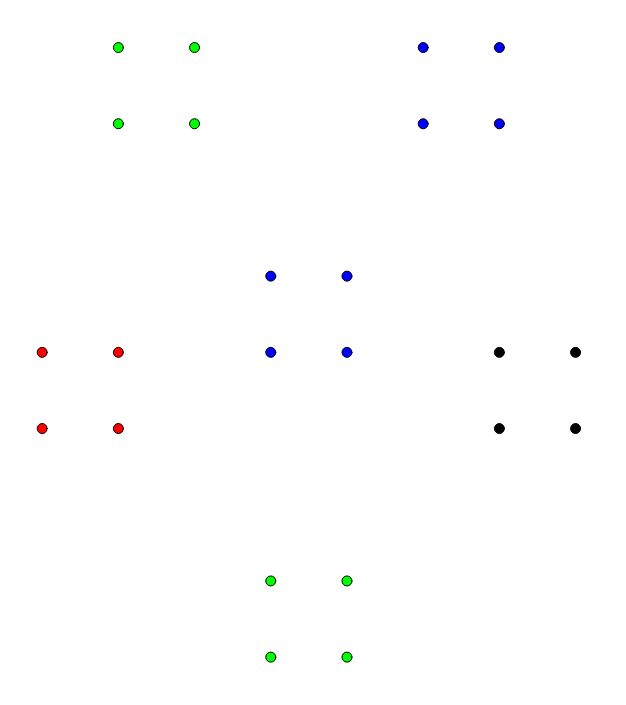}  \hspace*{1cm} \includegraphics[scale=0.27]{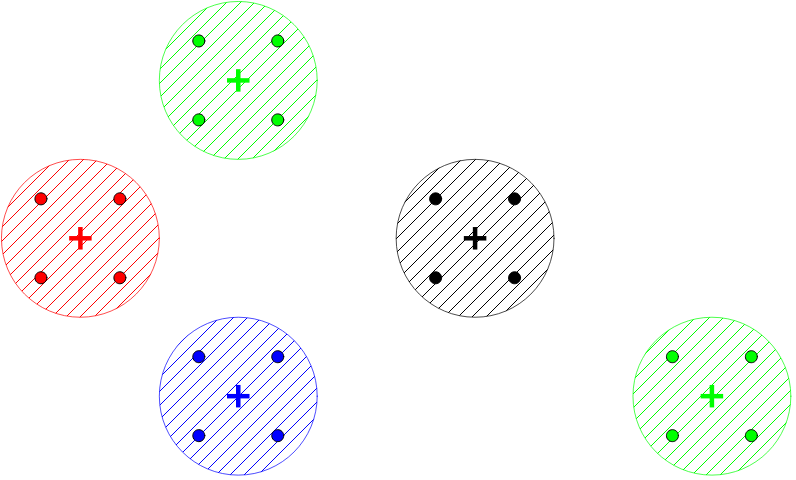}
\end{center}
\caption{Illustration of the strategy to reduce the $z$-variables.\label{fig:ex2}}
\end{figure}
%
%
%

{
The second strategy consists of fixing to zero some of the point-to-class assignments ($z$-variables). In the picture shown in Figure \ref{fig:ex2} (left), one can see a set of points which seems reasonable to group in 5 clusters. One may notice that assignments from the red class to the black class (and vice versa) are rarely going to occur following our approach. This is due to the fact that given this configuration of points, our model would provide a cell for red points located far from a black cell (otherwise it would probably not be maximizing the distance between classes). Following this idea, we derive a procedure to fix some of the $z$-variables to zero. Another observation that comes out from Figure \ref{fig:ex2}, is that with respect to the red cluster we obtain the following sorting on the set of distances: $d_{green}^{1} \leq d_{blue} \leq d_{black} \leq d_{green}^{2}$. Then, since $d_{green}^{1}<d_{green}^{2}$, we may not take into account the distance to the green cluster on the very right. Thus, we would fix to zero all $z_{is}$ variables that relate the red cluster with the maximum of their minimum distance set, that is, in this case we would fix to zero the $z_{is}$-variables associated to the black cluster with the red cluster ($d_{green}^{1}<d_{black}$ and $d_{blue}<d_{black}$) and vice versa.

The above observations lead us to some strategies for fixing $z$-variables to zero that we summarize in Algorithm \ref{alg2}.

\begin{algorithm}[H]

\begin{enumerate}
\item Group the observations in $L$ clusters, being each cluster formed by points of the same class.  Let $a_{\ell s}$ be the centroids of the cluster, $\ell=1,\ldots,L$, $s \in \left\lbrace 1, \ldots , k \right\rbrace$.
\item Compute the squared Euclidean distance matrix between centroids: $\mathcal{D}=\Big(\|a_{\ell s}- a_{qs'}\|^2\Big)$.
\item For each cluster $\ell$, $\ell=1,\ldots,L$, assigned to class $s$, compute the cluster $q$ with class $s'\neq s$ such that $\|a_{\ell s}-a_{qs'}\|^2$ is maximum and greater than a given threshold. For each observation $i$ in cluster $q$, set $z_{is}=0$. For each observation $\hat \i$ in cluster $\ell$, set $z_{\hat \i s'}=0$.
\end{enumerate}

 \caption{Strategy to reduce $z$-variables.\label{alg2}}
\end{algorithm}


}

\section{Experiments}\label{sec:experiments}

\subsection{Real Datasets}

In this section we report the results of our computational experience. We have run a series of experiments to analyze the performance of our model in some real  datasets widely used in the classification literature, and that are available in the UCI machine learning repository \cite{uci}. Summarized information about the datasets is detailed in Table \ref{datos}. In such a table we report, for each dataset, the number of observations considered in the training sample ($n_{\rm {Tr}}$) and test sample ($n_{\rm {Te}}$), the number of features ($p$), the number of classes ($k$), the number of hyperplanes used in our separation ($m$), and the number of hyperplanes required by the OVO methodology ($m_{\rm OVO}$).

\begin{table}[h]\label{datos}
\centering
\begin{tabular}{l|ccccccc}
{\bf Dataset} & $n_{\rm {Tr}}$ & $n_{\rm Te}$  & $p$  & $k$&  & $m$ &  $m_{\rm OVO}$\\
\hline\hline

\texttt{Forest }& 75 & 448 & 28 & 4 & & 3 & 6 \\
\texttt{Glass}  & 75 & 139 & 10 & 6 & & 6 & 15 \\
\texttt{Iris}  & 75 & 75  & 4 & 3 &  & 2 & 3  \\
\texttt{Seeds} & 75 & 135 & 7 & 3 & & 2 & 3 \\
\texttt{Wine} & 75 & 103  & 13 &  3 & & 2 & 3 \\
\texttt{Zoo }& 75 & 26 & 17 & 7 & & 4 & 21 \\
 \hline

\end{tabular}
\caption{Data sets used in our computational experiments.\label{datos}}
\end{table}

For these datasets, we have run both the hinge-loss \eqref{model} and the ramp-loss \eqref{modelRL} models, measuring the margin with the $\ell_1$ and the $\ell_2$ norms.
We have performed a $5$-cross validation scheme to test each of the approaches. Thus, the data sets were split into 5 train-test random partitions. Then, the models were solved for the training sample and the resulting classification rule was applied to the test sample.  We report the average accuracy, ACC, in percentage, of the $5$ repetitions of the experiment on test:
$$
{\rm ACC} = \dfrac{\# \text{Well Classified Test Observations}}{n_{\rm {Te}}} \cdot 100.
$$

The parameters of our models were also chosen after applying a grid-based $4$-cross validation scheme. In particular, we calibrate the value of $m$ (number of hyperplanes to be located)  and the misclassification costs $C_1$ and $C_2$ in:
$$
m \in \{ 2, \ldots , k  \}, \quad C_1, C_2 \in \{ 0.1, 0.5 , 1, 5, 10\}.
$$

For hinge-loss models $C_1=C_2$, whereas for ramp-loss models we consider $C_1 < C_2$  to give a high penalty to wrong-classified observations.
As a result we obtain a misclassification error for each grid point, and we choose the combination of parameters that provide the lowest error. The same methodology was also applied to the other methods: OVO, Weston-Watkins (WW), Crammer-Singer (CS) and Lee, Lin and Wahba (LLW), calibrating the misclassifying cost $C$ in $\{ 10^i, \ i =-6,\ldots , 6 \}$.

The mathematical programming models solving the MCSVM methods were coded in Python 3.6, and solved using Gurobi 7.5.2 on a PC Intel Core i7-7700 processor at 2.81 GHz and 16GB of RAM. The standard methods (OVO, WW and CS) were applied using R-KernLab. Finally, LLW was applied using the software package MSVMpack provided by  \cite{MSVMpack}. \\

In Table \ref{tabla2} we report the average accuracies obtained with our 4 models: (\eqref{model} and \eqref{modelRL} with $\ell_1$ and $\ell_2$ norms) and  those obtained with OVO, WW, CS and LLW. The first two columns ($\ell_1$ RL and $\ell_1$ HL) show the average accuracies of our two approaches (Ramp Loss - RL- and Hing Loss -HL-) using the $\ell_1$-norm. On the other hand,  the third and four columns ($\ell_2$ RL and $\ell_2$ HL) provide the same results for the $\ell_2$-norm. In the last four columns, we report the average accuracies obtained with the   OVO, WW, CS and LLW methods.
The best accuracies obtained for each dataset are bolfaced in Table \ref{tabla2}.

One can observe that our methods always outperform the results obtained by OVO, WW and CS, although the results are rather similar. Actually, running the two samples proportion test among them, we can not ensure significative differences in all cases. Comparing our  methods with LLW the results are different. In three out of the 6 databases (Forest, Glass and Iris) our methods are superior to LLW with up to 10\% significative differences with respect to the two samples proportion tests. In the remaining three databases (Seeds, Wine and Zoo) the results are similar with no statistical significative differences with respect to the two samples proportion test.

The results indicate that these UCI databases are friendly for linear classifiers (with the only exception of Glass) and thus all these methods perform reasonably well on test prediction. Thus, it is not possible to establish  a clear ranking of these classification methods based only on these databases. In order to asses a more complete comparative of the methods we continue the analysis in the following subsection with a battery of more complex datasets.

\begin{table}[h]
\centering
\begin{tabular}{l|cccccccccc}
Dataset & $\ell_1$ RL & $\ell_1$ HL   & $\ell_2$ RL  & $\ell_2$ HL  &  &  &  OVO & WW & CS  & LLW\\
\hline
\hline
Forest &  80.66 & 80.12  &  {\bf 82.30} &  81.62  & &  & 82.10 & 78.40 & 78.60 & 72.54 \\
Glass  & 64.92  &  64.92  &  {\bf 65.32}  &  {\bf 65.32} & &  & 58.76 & 56.25 & 59.26& 57.04\\
Iris  &  95.08 &  95.40  &  96.44 &  {\bf 96.66} &  &  & 93.80 & 96.44  & 96.44 & 84.17\\
Seeds &   93.66  &   93.66 &  93.52 &  93.52 & &  & 91.02 & 93.52 & 93.52& {\bf 95.46}\\
Wine & 95.20 & 95.20  &  {\bf 96.82} &   {\bf 96.82} & &  &  96.34 & 96.09 & 96.17& 96.31 \\
Zoo &  89.75 & 89.75 &  89.75 &   89.75 & &  & 87.44 & 87.68 & 87.68 & {\bf 91.53}\\
\hline
\end{tabular}
\caption{Average accuracies obtained for the real-world instances \label{tabla2}}
\end{table}

%

\subsection{Synthetic Experiments}

This section reports extra computational experiments over some synthetic instances that allow us to establish some rank of the methods based on their accuracies. We have generated $6$ instances of $750$ observations in $\R^{10}$ distributed as  multivariate normal distributions separated by a constant factor. The instances are denoted as $X$\texttt{C}$Y$\texttt{N} where $X$ is the number of classes (ranging in $\{2, 3,4,7,10\}$) and $Y$ the number of different multivariate normal distributions (ranging in $\{4,6,8,15,20\}$). All the instances are available at \url{http://bit.ly/SynthData_MCSVM} for benchmarking purposes. Observe that for each instance, the class labels  have been randomly assigned to the normal distributions. For illustration purposes, a two-dimensional instance generated in the same way that our $10$-dimensional instances  is shown in Figure \ref{fig:clouds}: the data are generated  according to $20$  normal distributions which are assigned to $10$ classes.


\begin{figure}[h]
\includegraphics[scale=0.75]{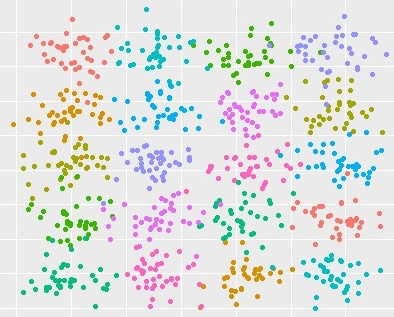}
\centering
\caption{A $2$-dimensional illustration of our instances.\label{fig:clouds}}
\end{figure}

In Table \ref{table4} we report the average accuracies obtained with a $10$-fold cross validation experiment, in which $75$ observations are taken into the training samples and $675$ in the test samples. As before, we have compared our approach (with the Euclidean norm and Hinge-Loss misclassification error) with the existing methodologies: OVO, WW, CS and LLW.  The calibration of the parameters was also done as in the previous section.
\begin{table}[h]
\centering
\begin{tabular}{l|cc|cccc}
Dataset & $\ell_2$ HL &$m$  &   OVO & WW & CS  &LLW  \\
\hline\hline
\texttt{2C4N}      &     {\bf 94.35}&  2   &        60.75 &  60.75     &   60.75    & 60.75 \\
\texttt{3C6N}      &     {\bf 85.74}&  3   &        39.47 & 41.69 & 39.03 & 36.50 \\
\texttt{4C8N\_1}      &     {\bf 92.76}&  4   &        36.46 & 32.37 & 29.14 & 31.86 \\
\texttt{4C8N\_2}      &     {\bf 91.78}&  4    &        48.54 & 35.14 & 34.69 & 39.14 \\
\texttt{7C15N}    &     {\bf 88.54}&  6   &        27.37 & 19.64 & 18.63 & 20.35 \\
\texttt{10C20N}  &     {\bf 85.81} &  7     &        29.73 & 16.17 & 15.37 & 15.10 \\
\end{tabular}
\caption{Average accuracies obtained for  the synthetic instances. \label{table4}}
\end{table}

One can observe in Table \ref{table4} that the results obtained with our approach are much better than those obtained with the other approaches. The generation procedure permits that, in the synthetic instances,  separated clouds of points are assigned to the same class. As it can be anticipated, our methodology adapts well to this characteristic whereas the other approaches  fail to handle these data. The reader may observe that this type of data are common in real-world datasets. In particular, many diseases are associated to low or high values of certain medical indices thus fitting to this topology in which separated clusters of observations belong to the same class.

Our main conclusion, from the results reported in Table \ref{table4}, is that our method is adequate for this type of synthetic data, highly outperforming OVO, WW, CS and LLW. Moreover, the accuracy percentages are not only superior but  they are also statistically better with respect to the two samples proportion test with a significance level of $1\%$.

\section{Conclusions}
In this paper we propose a novel modeling framework for multiclass classification based on the Support Vector Machine paradigm, in which the different classes are linearly separated and the separation between classes is maximized. We propose two approaches, that depend on the way to account for the misclassification error, to compute an optimal arrangement of hyperplanes subdividing the space into cells, and so that each cell is assigned to a class based on the training data. The models result in Mixed Integer (Linear and Non Linear) Programming problems. Some dimensionality reduction and preprocessing strategies are presented in order to help solvers to find good (optimal) solutions of the corresponding problems. We also prove that an analogous of the kernel trick can be extended to this framework. The performance of this approach is illustrated on some well-known datasets of the multi-category classification literature as well as in some synthetic, but still realistic, examples, in which our approach works remarkably well compared to the existing methodologies. Several extensions of our approach are possible. Among them we would like to mention the use of  heuristic algorithms to solve the complex mixed integer nonlinear programs which may alleviate the computational  burden of the methodology still keeping high quality solutions. Moreover, our approach could also be extended to the framework of semisupervised learning~\cite{bennet-demiriz,ortigosa} by assigning unlabelled observations to their closest well-classified cells (which are obtained using the labeled training data). Both research lines seem to be promising and will be the focus of a forthcoming paper.


\appendix

\section*{Proof of Theorem \ref{th:kernels}}\label{sec:kernels}

Let us consider the mathematical programming formulation for our problem:

\begin{align}
\min & \;\; \|\omega_1\|^2+ C_1 \dsum_{i=1}^n \dsum_{r=1}^m e_{ir}+   C_2\dsum_{i=1}^n \dsum_{r=1}^m d_{ir} \label{model}\tag{${\rm MCSVM}$}\\
\mbox{s.t. } &\frac{1}{2}\|\omega_{r-1}\|^2 \geq \frac{1}{2}\|\omega_r\|^2, \forall r=2, \ldots, m,\label{q1}\\
&\omega_{r}^tx_i + w_{r_0} \geq - T (1-t_{ir}), \forall i\in N, r \in M,\label{t+}\\
& \omega_{r}^tx_i + w_{r_0}  \leq T t_{ir}, \forall i\in N, r \in M,\label{t-}\\
&\dsum_{s=1}^k z_{is}=1, \forall i\in N,\label{q2}\\
& \|z_i - z_j\|_1 \leq 2 \|t_i-t_j\|_1, \forall i, j\in N,\label{q3}\\
& \xi_i \geq  \frac{1}{2}  \|z_i-\delta_{i}\|_1,  \forall i\in N,\label{xi}\\
& \dsum_{j \in N:\atop y_i=y_j} h_{ij}=1, \forall i\in N,\label{q6}\\
& \xi_j+h_{ij} \leq 1, \forall i, j\in N (y_i=y_j),\label{q7}\\
& h_{ii} = 1- \xi_i, \forall i\in N,\label{q7a}\\
&\omega_{r}^t x_i + \omega_{r0} \geq 1-e_{ir}- T\;(3-t_{ir}-t_{jr}-h_{ij}), \forall r \in M,\label{q4}\\
& \omega_{r}^t x_i +\omega_{r0} \leq -1 +e_{ir}+ T\; (1+t_{ir}+t_{jr}-h_{ij}), \forall r\in M,\label{q5}\\
& d_{ir} \geq 1- \omega^t_r x_i -\omega_{r0} - T\,(2+t_{ir}-t_{jr}-h_{ij}), \forall i,j\in N (y_i=y_j), r\in M,\label{q8}\\
& d_{ir} \geq 1+ \omega^t_r x_i + \omega_{r0} - T\,(2-t_{ir}+t_{jr}-h_{ij}), \forall i,j\in N (y_i=y_j), r\in M,\label{q9}\\
& \mathbf{\omega}_r \in \R^p, \omega_{r0} \in \R, \forall r \in M,\nonumber\\
& d_{ir}, e_{ir} \geq 0,  t_{ir} \in \{0,1\}, \forall i\in N, r\in M,\nonumber\\
& h_{ij} \in \{0,1\}, \forall i, j \in N,\nonumber\\
& z_{is} \in \{0,1\}, \forall i\in N, s\in K,\nonumber\\
&\xi_i \in \{0,1\},  \forall i\in N.\nonumber
\end{align}

Note that once the binary variables of our model are fixed, the problem becomes  polynomial time solvable and it reduces to find the coordinates of the coefficients and intercepts of the hyperplanes and the different misclassifying errors. In particular, it is clear that the MINLP formulation for the problem is equivalent to:

\begin{align*}
\displaystyle \min_{h,z,t,\xi} & \;\; \Phi(h,z,t,\xi)\\
\mbox{s.t. } &\eqref{q2}-\eqref{q7a},\\
& h_{ij} \in \{0,1\}, &\forall i, j \in N,\nonumber\\
& t_{ir} \in \{0,1\}&\forall i\in N, r\in M,\nonumber\\
& z_{is} \in \{0,1\},& \forall i\in N, s\in K,\nonumber\\
& \xi_{i} \in \{0,1\}, &\forall i\in N.\nonumber
\end{align*}

\noindent where $\Phi$ is the evaluation of the margin and hinge-loss errors for any  assignment provided by the binary variables. That is,

\begin{align}
\Phi( h,  z,  t,  \xi) = & \min_{\omega,\omega_0,e,d} \;\; \dfrac{1}{2}\|\omega_1\|^2+ C_1 \dsum_{i=1}^n \dsum_{r=1}^m e_{ir}+  C_2\dsum_{i=1}^n \dsum_{r=1}^m d_{ir} \label{evalfi}  \tag{${\rm Eval}_\Phi$} \nonumber\\
&  \mbox{s.t. }   \eqref{q1}, \eqref{q4}-\eqref{q9}, \nonumber\\
& \mathbf{\omega}_r \in \R^d, \omega_{r0} \in \R,&\forall r \in M,\nonumber\\
& d_{ir}, e_{ir} \geq 0 &\forall i\in N, r\in M.\nonumber
\end{align}
The above problem would be separable provided that the first $m-1$ constraints \eqref{q1} were relaxed. For the sake of simplicity in the notation, we consider the following functions, $\kappa^\ell: \{0,1\} \rightarrow \R$ for $\ell=1, 2, 3$, defined as:
$$
\kappa^1(t) := T(1-t), \;\; \kappa^2(t) := T t, \;\; \kappa^3(t) := - 1 + Tt
$$
for $t\in \{0,1\}$. Note that $\kappa^1(0)=\kappa^2(1)=T$, $\kappa^1(1)=\kappa^2(0)=0$, and that $\kappa^3(0)=-1$, $\kappa^3(1)=T-1$.

Based on the separability mentioned above, we introduce another instrumental family of problems for all $r=2,\ldots,m$, namely,
\begin{align}
&\Phi_r( h, z, t, \xi,\omega_1) &=  \min_{\omega_r, \omega_{r0},  e, d}   C_1 \dsum_{i=1}^n  e_{ir}+  C_2\dsum_{i=1}^n  d_{ir}&\nonumber\\
&   &\mbox{s.t. }    \dfrac{1}{2} \|\omega_r\|^2 - \dfrac{1}{2} \|\omega_1\|^2 &\leq 0, \nonumber\\
&&-\omega_{ir}^tx_i - \omega_{r0} &\leq \kappa^1(t_{ir}), \forall i,r,\nonumber\\
&& \omega_{ir}^tx_i + \omega_{r0}  &\leq  \kappa^2(t_{ir}), \forall i, r,\nonumber\\
&&-\omega_{r}^t x_i - \omega_{r0} - e_{ir} &\leq  \kappa^3(u_{ijr}^+), \forall i,j,r\nonumber\\
& &\omega_{r}^t x_i +\omega_{r0} -e_{ir} &\leq  \kappa^3(u_{ijr}^-), \forall i,j, r, \nonumber\\
& & -\omega^t_r x_i -\omega_{r0} -d_{ir}&\leq \kappa^3(q_{ijr}^+),\forall i,j (y_i=y_j), \label{spr}\tag{${\rm SP}_r$}\nonumber\\
& & \omega^t_r x_i + \omega_{r0}-d_{ir}  &\leq \kappa^3(q_{ijr}^-), \forall i,j (y_i=y_j), \nonumber\\
& &-d_{ir} &\leq 0, \forall i, \nonumber\\
& &-e_{ir}&\leq 0, \forall i,\nonumber\\
& & \mathbf{\omega}_r \in \R^d,& \omega_{r0} \in \R,\nonumber
\end{align}
where for simplifying the notation we have introduced the auxiliary variables $u_{ijr}^+:= 3-t_{ir}-t_{jr}-h_{ij}$, $u_{ijr}^-:=1+t_{ir}+t_{jr}-h_{ij}$, $q_{ijr}^+ = 2+t_{ir}-h_{ij}-t_{jr}$ and $q_{ijr}^- = 2+t_{jr}-h_{ij}-t_{ir}$, for $i, j \in N$ and $r\in M$.

Observe that $\Phi_r$, apart from the first constraint, only considers variables associated to the $r$th hyperplane.

Moreover, we need another problem that accounts for the first part of $\Phi$.
\begin{align}
\Phi_1( h, z, t, \xi) &=  \min_{\omega_1, \omega_{10},  e, d}  \dfrac{1}{2} \|\omega_1\|_1^2 +C_1 \dsum_{i=1}^n e_{i1}+  C_2\dsum_{i=1}^n  d_{i1}\nonumber\\
 & \mbox{s.t. } -\omega_{1}^tx_i - \omega_{10} \leq \kappa^1(t_{i1}), \forall i,\nonumber\\
& \omega_{1}^tx_i + \omega_{10}  \leq  \kappa^2(t_{i1}), \forall i, \nonumber\\
&-\omega_{1}^t x_i - \omega_{10} - e_{i1} \leq  \kappa^3(u_{ij1}^+), \forall i,j,\nonumber\\
 &\omega_{1}^t x_i +\omega_{10} -e_{i1} \leq  \kappa^3(u_{ij1}^-), \forall i,j, r, \nonumber\\
 & -\omega^t_1 x_i -\omega_{10} -d_{i1}\leq \kappa^3(q_{ij1}^+),\forall i,j (y_i=y_j), \label{sp1}\tag{${\rm SP}_1$}\nonumber\\
 & \omega^t_1 x_i + \omega_{10}-d_{i1}  \leq \kappa^3(q_{ij1}^-), \forall i,j (y_i=y_j), \nonumber\\& -d_{i1} \leq 0, \forall i, \nonumber\\
& -e_{i1}\leq 0, \forall i,\nonumber\\
&  \mathbf{\omega}_1 \in \R^d, \omega_{10} \in \R,\nonumber
\end{align}

Thus, using the above notation, \eqref{model} is equivalent to the following problem:

\begin{align*}
\displaystyle \min_{h,z,t,\xi,\omega_1} & \;\; \Phi_1(h,z,t,\xi)+\sum_{r=2}^m \Phi_r(h,z,t,\xi,\omega_1)\\
\mbox{s.t. } &\eqref{q2}-\eqref{q7a}\\
& h_{ij} \in \{0,1\}, &\forall i, j \in N,\nonumber\\
& t_{ir} \in \{0,1\}&\forall i\in N, r\in M,\nonumber\\
& z_{is} \in \{0,1\},& \forall i\in N, s\in K,\nonumber\\
& \xi_{i} \in \{0,1\}, &\forall i\in N.\nonumber
\end{align*}

Observe that the above problem only accounts for $\omega_1$ and the binary variables. Once they are fixed can be plugged into the \eqref{spr}, $r=1,\dots,m$ subproblems and then it allows one to find the optimal values of the continuous variables. The elements $\kappa^1( t_{ir}), \kappa^2( t_{ir}) ,\kappa^3(u_{ijr}^+) , \kappa^3(u_{ijr}^-), \kappa^3( q^+_{ijr})$, and $\kappa^3( q^-_{ijr})$ are fixed constants once the binary variables are fixed.

In order to solve the problem for a fixed set of binary variables we can proceed recursively, solving independently \eqref{spr} for all $r=2,\ldots,m$, and then combining their solutions with \eqref{sp1}. 

Therefore, to get that goal we apply Lagrangean duality to obtain an exact dual reformulation. Indeed, relaxing the constraints of \eqref{spr} with dual multipliers $\mu_r\ge 0$ (assuming that $\mu^0_r \neq 0$) and denoting by $\alpha_{ir} =   \alpha_{ir} = \mu_{ir}^1- \mu_{ir}^2 +\dsum_{j: y_i=y_j} (\mu_{ijr}^3- \mu_{ijr}^4+\mu_{ijr}^5 - \mu_{ijr}^6)$, for all $i=1, \ldots, n$ and $r =1, \ldots, m$; after some derivations that can be followed in Lemma \ref{le:A1}, one can check that evaluating $\Phi$ for given values of the binary variables, can be obtained solving the continuous optimization problem \eqref{jspf}. (All details can be found in Lemma \ref{le:A1}.)

Next, we analyze how the evaluation of the function $\Phi$, given in problem \eqref{evalfi}, depends on the original data. In order to do that we find the optimal solutions of \eqref{jspf}. Dualizing the constraints that correspond to $\Phi_1$ with multipliers $\mu_1\ge 0$ and those where the variables $\Gamma_r$ appear,  with dual multipliers $\gamma_r$,  the Lagrangean function of the problem \eqref{jspf} results in:

\begin{eqnarray*}
\begin{split}
\mathcal{L}(\omega_1, \omega_{01}, d, e; \mathbf{\mu}) &= \frac{1}{2} \|\omega_1\|^2(1-\sum_{r=2}^m \gamma_r \mu^0_r) + C_1\sum_{i=1}^n e_{i1}+C_2\sum_{i=1}^n d_{i1}+ \sum_{r=2}^m \Gamma_r(1-\gamma_r) \nonumber \\
& +\sum_{r=2}^m \left( \dfrac{\gamma_r}{2\mu^0_r} \dsum_{i,j=1}^n \alpha_{ir}\alpha_{jr} x_i^t x_j  +\dsum_{i,j: u^+_{ijr}=0} \mu_{ijr}^3 \right.\nonumber\\
&\left.+ \dsum_{i,j: u^-_{ijr}=0} \mu_{ijr}^4 +\dsum_{i,j: q^+_{ijr}=0} \mu_{ijr}^5+ \dsum_{i,j: q^-_{ijr}} \mu_{ijr}^6\right) \\
&+\dsum_{i} \mu^1_{i1} \Big(-\omega_{1}^t x_i - \omega_{10}- \kappa^1(t_{i1})\Big)\\
&+\dsum_{i} \mu^2_{i1} \Big(\omega_{1}^t x_i + \omega_{10}- \kappa^2(t_{i1})\Big)\\
&  +\dsum_{i} \mu^3_{ij1} \Big(-\omega_{1}^t x_i - \omega_{10} - e_{i1} - \kappa^3(\bar u^+_{ij1})\Big)\\
&+ \dsum_{i} \mu^4_{ij1} \Big( \omega_{1}^t x_i +\omega_{10} -e_{i1}  - \kappa^3(\bar u^-_{ij1})\Big)
\end{split}
\end{eqnarray*}
\begin{eqnarray*}
\begin{split}
&+ \dsum_{i, j} \mu^5_{ij1} \Big(-d_{ir} -\omega^t_1x_i -\omega_{10}- \kappa^3(\bar q_{ij1}^+)\Big)\\
& +\dsum_{i, j} \mu^6_{ij1} \Big(-d_{ir} + \omega^t_1 x_i + \omega_{10}- \kappa^3(\bar q_{ij1}^-)\Big)\\
& + \dsum_{i} \mu^7_{i1}(-d_{i1}) +  \dsum_{i} \mu^8_{i1}(-e_{i1})   
\end{split}
\end{eqnarray*}

and its KKT optimality conditions reduce to:
\begin{itemize}
\item[$\bullet$] $(1-\sum_{r=2}^m \gamma_r \mu^0_r)\omega_1= \dsum_{i=1}^n \Big (\mu_{i1}^1- \mu_{i1}^2 + \dsum_{j: y_i=y_j} (\mu_{ij1}^3 - \mu_{ij1}^4+\mu_{ij1}^5 - \mu_{ij1}^6 )\Big) x_i$.
\item[$\bullet$] $\dsum_{i=1}^n \Big (\mu_{i1}^1- \mu_{i1}^2 + \dsum_{j: y_i=y_j} (\mu_{ij1}^3 - \mu_{ij1}^4+\mu_{ij1}^5 - \mu_{ij1}^6 )\Big) =0$.
\item[$\bullet$] $\dsum_{j: y_i=y_j}(\mu_{ij1}^3 +\mu_{ij1}^4) + \mu^8_{i1} = C_1$, for all $i$.
\item[$\bullet$] $ \dsum_{j: y_i=y_j} (\mu_{ijr}^5 + \mu_{ijr}^6 ) + \mu^7_{ir} = C_2$, for all $i$.
\item[$\bullet$] $1-\gamma_r=0$, for all $r=2,\ldots,m$.
\item[$\bullet$] $\mu \geq 0$, $\gamma \ge 0$.
\end{itemize}

Using the same notation as before,  $\alpha_{i1} =  \mu_{i1}^1- \mu_{i1}^2 + \dsum_{j: y_i=y_j} (\mu_{ij1}^3 - \mu_{ij1}^4+\mu_{ij1}^5 - \mu_{ij1}^6 )$, for all $i=1, \ldots, n$, and simplifying the expressions using the KKT conditions and the complementary slackness conditions we get that the strong dual of \eqref{jspf} is:

\begin{align}
\max_{\omega, \omega_{.0}, d, e; \mathbf{\mu}} &  \dfrac{1}{2(1-\sum_{r=2}^m \mu^0_r)} \dsum_{i,j=1}^n \alpha_{i1}\alpha_{j1} x_i^t x_j  +\dsum_{i,j: u^+_{ij1}=0} \mu_{ijr}^3 + \dsum_{i,j: u^-_{ij1}=0} \mu_{ij1}^4\nonumber\\
 &+\dsum_{i,j: q^+_{ij1}=0} \mu_{ijr}^5+ \dsum_{i,j: q^-_{ij1}=0} \mu_{ij1}^6 +\sum_{r=2}^m \left( \dfrac{1}{2\mu^0_r} \dsum_{i,j=1}^n \alpha_{ir}\alpha_{jr} x_i^t x_j    \right.\nonumber \\
 & \left.+\dsum_{i,j: u^+_{ijr}=0} \mu_{ijr}^3 + \dsum_{i,j: u^-_{ijr}=0} \mu_{ijr}^4+\dsum_{i,j: q^+_{ijr}=0} \mu_{ijr}^5+ \dsum_{i,j: q^-_{ijr}=0} \mu_{ijr}^6\right)\nonumber\\
\mbox{s.t. }& \dsum_{i=1}^n \alpha_{ir} =0, \forall r\label{djspf}\tag{${\rm DJSP}$}\\
& \dsum_{i=1}^n \alpha_{i1} x_i = (1-\sum_{r=2}^m \mu^0_r) \omega_1,\nonumber\\
& \dsum_{i=1}^n \alpha_{ir} x_i = \mu^0_r \omega_r, \forall r\ge 2 \nonumber\\
&\dsum_{j: u^+_{ijr}=0} \mu_{ijr}^3 + \dsum_{j: u^-_{ijr}=0} \mu_{ijr}^4   \leq C_1, \forall i,\nonumber\\
&\dsum_{j: q^+_{ijr}=0} \mu_{ijr}^5+ \dsum_{j: q^-_{ijr}=0} \mu_{ijr}^6 \leq C_2, \forall i, r,\nonumber\\
& \mu_{ir}^1 =0, \forall i, r \mbox{ with } \bar t_{ir}=0,\nonumber\\
& \mu_{ir}^2 =0, \forall i, r \mbox{ with } \bar t_{ir}=1,\nonumber\\
& \alpha_{ir} =  \mu_{ir}^1- \mu_{ir}^2 +\dsum_{i,j: u^+_{ij1}=0} \mu_{ijr}^3- \dsum_{i,j: u^-_{ij1}=0} \mu_{ijr}^4+\dsum_{i,j: q^-+{ij1}=0} \mu_{ijr}^5\\
& - \dsum_{i,j: q^-_{ij1}=0}  \mu_{ijr}^6,\forall i, \nonumber\\
& \mathbf{\mu_r} \geq 0,  \mathbf{\omega_r} \in \R^d, \omega_{r0} \in \R,\; \forall r=1,\ldots,m \nonumber
\end{align}

Note that the objective function of \eqref{djspf} only depends of the $x$-input data through the inner products $x_i^tx_j$ for $i, j=1, \ldots, n$, and also the expressions of $\omega_1$  from the dual variables is given as:
$$
(1-\sum_{r=2}^m \mu^0_r) \omega_1= \dsum_{i=1}^n \alpha_{i1} x_i.
$$
The dependence of $\omega_1$ with other $\omega_r$ in the primal formulation is only through the nonincreasing sorted values of $\|\omega_1\|, \ldots, \|\omega_m\|$ in which we now that the largest value is $\|\omega_1\|$. Thus, solving the dual problem \eqref{djspf} allows us to determine all the optimal hyperplanes $\omega_r$, for all $r=1,\ldots,m$. In case a transformation $\varphi$ is performed to the input data, the dependence of the data in the problem will be through the inner products $\varphi(x_i)^t \varphi(x_j)$ for all $i, j=1, \ldots, n$, and the kernel Theory can be applied.

{\hfill $\square$}

\begin{lem} \label{le:A1}
The evaluation of $\Phi$  for given values of the binary variables $\bar t$, $\bar \xi$, $\bar h$ and $\bar z$ can be obtained solving the following continuous optimization problem:

\begin{align}
\hat\Phi( h, z, t, \xi) = & \min \left\{ \frac{1}{2} \|\omega_1\|^2 + C_1\sum_{i=1}^n e_{i1}+C_2\sum_{i=1}^n d_{i1}+ \sum_{r=2}^m \Gamma_r  \right\}\nonumber\\
& \mbox{s.t. } \left(-\frac{\mu^0_r}{2} \|\omega_1 \|^2 + \dfrac{1}{2\mu^0_r} \dsum_{i,j=1}^n \alpha_{ir}\alpha_{jr} x_i^t x_j\right.\nonumber \\
 &+\dsum_{i,j: u^+_{ijr}=0} \mu_{ijr}^3 + \dsum_{i,j: u^-_{ijr}=0} \mu_{ijr}^4\nonumber\\
 &\left. +\dsum_{i,j: q^+_{ijr}=0} \mu_{ijr}^5+ \dsum_{i,j: q^-_{ijr}=0} \mu_{ijr}^6+C_1\right) \le \Gamma_r, \forall r\ge 2, \nonumber \\
&-\omega_{1}^tx_i - \omega_{10} \leq \kappa^1(t_{i1}), \forall i,\nonumber\\
& \omega_{1}^tx_i + \omega_{10}  \leq  \kappa^2(t_{i1}), \forall i, \nonumber\\
&-\omega_{1}^t x_i - \omega_{10} - e_{i1} \leq  \kappa^3(u_{ij1}^+), \forall i,j,\nonumber\\
 &\omega_{1}^t x_i +\omega_{10} -e_{i1} \leq  \kappa^3(u_{ij1}^-), \forall i,j, r, \nonumber\\
 & -\omega^t_1 x_i -\omega_{10} -d_{i1}\leq \kappa^3(q_{ij1}^+),\forall i,j (y_i=y_j), \label{jspf}\tag{${\rm JSP}$}\\
 & \omega^t_1 x_i + \omega_{10}-d_{i1}  \leq \kappa^3(q_{ij1}^-), \forall i,j (y_i=y_j), \nonumber\\
 & -d_{i1} \leq 0, \forall i, \nonumber\\
& -e_{i1}\leq 0,  \forall i,\nonumber\\
&  \mathbf{\omega}_1 \in \R^d, \omega_{10} \in \R,\nonumber\\
&\dsum_{j: u^+_{ijr}=0} \mu_{ijr}^3 + \dsum_{j: u^-_{ijr}=0} \mu_{ijr}^4  + \mu_{ir}^8 \geq C_1, \forall i,\nonumber\\
&\dsum_{j: q^+_{ijr}=0} \mu_{ijr}^5+ \dsum_{j: q^-_{ijr}=0} \mu_{ijr}^6 \leq C_2, \forall i, r,\nonumber\\
& \mu_{ir}^1 =0, \forall i, r \mbox{ with } \bar t_{ir}=0,\nonumber\\
& \mu_{ir}^2 =0, \forall i, r \mbox{ with } \bar t_{ir}=1,\nonumber\\
&\alpha_{ir} =  \mu_{ir}^1- \mu_{ir}^2 +\dsum_{i,j: u^+_{ij1}=0} \mu_{ijr}^3- \dsum_{i,j: u^-_{ij1}=0} \mu_{ijr}^4+\dsum_{i,j: q^+_{ij1}=0} \mu_{ijr}^5\\
& - \dsum_{i,j: q^-_{ij1}=0}  \mu_{ijr}^6,\forall i, \nonumber\\
& \mathbf{\mu} \geq 0.\nonumber
\end{align}
\noindent where $\mu_r\ge 0$ are the dual multipliers of the constraints in Problem \eqref{spr}.
\end{lem}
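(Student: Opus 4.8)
The plan is to show that, once the binary variables $\bar t,\bar\xi,\bar h,\bar z$ are frozen, the partially dualized program \eqref{jspf} is an \emph{exact} reformulation of the evaluation problem \eqref{evalfi}. The enabling observation is that, for fixed binaries, \eqref{evalfi} is a convex program in the continuous block $(\omega,\omega_0,e,d)$: the objective $\tfrac12\|\omega_1\|^2+C_1\sum e+C_2\sum d$ is convex, and every remaining constraint \eqref{q1}, \eqref{q4}--\eqref{q9} is either linear or a convex quadratic (norm) inequality. Strong duality will therefore be available block by block, and that is what makes the primal-to-dual passage lossless.

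First I would make the problem separable across hyperplanes. The only couplings between different $\omega_r$ are the chain inequalities \eqref{q1}, $\tfrac12\|\omega_{r-1}\|^2\ge\tfrac12\|\omega_r\|^2$. Since the objective of \eqref{evalfi} sees the margin only through $\tfrac12\|\omega_1\|^2$ and the errors $e_{\cdot r},d_{\cdot r}$ decouple across $r$, I would argue the chain may be replaced, without changing the optimal value, by the single-sided bounds $\tfrac12\|\omega_r\|^2\le\tfrac12\|\omega_1\|^2$ for $r\ge2$: any point feasible for these bounds can be relabelled so that $\|\omega_2\|,\dots,\|\omega_m\|$ are non-increasing, which leaves the objective and all block constraints untouched. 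This yields $\Phi=\min_{\omega_1}\bigl[\Phi_1+\sum_{r=2}^m\Phi_r(\cdot,\omega_1)\bigr]$, where each block \eqref{spr} sees $\omega_1$ only through its single quadratic constraint $\tfrac12\|\omega_r\|^2-\tfrac12\|\omega_1\|^2\le0$ while $\Phi_1$ in \eqref{sp1} carries the margin term.

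Next I would dualize each block $\Phi_r$, $r\ge2$, with $\omega_1$ (equivalently $\|\omega_1\|^2$) treated as a parameter. Forming the Lagrangean $\theta_r(\mu_r,\omega_1)$ with multiplier $\mu_r^0\ge0$ on the coupling constraint and $\mu^1,\dots,\mu^8$ on the linear ones, the inner minimization splits: stationarity in $\omega_r$ gives $\mu_r^0\,\omega_r=\sum_i\alpha_{ir}x_i$, with $\alpha_{ir}$ the signed aggregate of the $\mu$'s named in the statement, so the quadratic contribution collapses to $\tfrac{1}{2\mu_r^0}\sum_{i,j}\alpha_{ir}\alpha_{jr}x_i^tx_j$ (this is where $\mu_r^0\neq0$ is used); stationarity in the nonnegative errors $e_{ir},d_{ir}$ produces the multiplier constraints $\sum_j(\mu_{ijr}^3+\mu_{ijr}^4)+\mu_{ir}^8\ge C_1$ and $\sum_j(\mu_{ijr}^5+\mu_{ijr}^6)\le C_2$ of \eqref{jspf}; and complementary slackness against the linear constraints leaves, of the constants $\kappa^3(\cdot)$, only the terms indexed by $u^\pm_{ijr}=0$ and $q^\pm_{ijr}=0$, i.e.\ the four surviving sums. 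The dual value $\theta_r=-\tfrac{\mu_r^0}{2}\|\omega_1\|^2+\tfrac{1}{2\mu_r^0}\sum_{i,j}\alpha_{ir}\alpha_{jr}x_i^tx_j+(\text{surviving constants})$ equals $\Phi_r(\omega_1)$ by strong duality. Introducing an epigraph variable $\Gamma_r$ for each dualized block, keeping $\Phi_1$ in its primal form \eqref{sp1}, and collecting all multiplier-feasibility constraints yields precisely \eqref{jspf}.

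I expect the main obstacle to be the exactness of this partial dualization, and it is twofold. One point is to show the single-sided relaxation of \eqref{q1} is lossless, where the relabelling argument must genuinely use that the objective penalizes only $\|\omega_1\|$. The sharper point is that the epigraph constraint is built from a \emph{single} feasible $\mu_r$, so I must check it reproduces $\Phi_r(\omega_1)=\max_{\mu_r}\theta_r(\mu_r,\omega_1)$ and not merely a bound; here I would invoke Slater's condition for each convex block \eqref{spr} (e.g.\ $\omega_r=0$ with $e,d$ large is strictly feasible) to force a zero duality gap, together with the concavity of $\theta_r$ in $\mu_r^0$ to guarantee the optimal multiplier has $\mu_r^0>0$, so that the reformulation \emph{attains} the correct value. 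The remaining work, tracking the big-$T$ constants $\kappa^1,\kappa^2,\kappa^3$ through complementary slackness index by index, is routine but error-prone, and I would carry it out explicitly for each constraint family.
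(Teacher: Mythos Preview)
Your plan is essentially the paper's own argument: decompose $\Phi$ into the block $\Phi_1$ and the blocks $\Phi_r$ ($r\ge 2$), Lagrangean-dualize each $\Phi_r$ with multipliers $\mu_r^0,\mu^1,\dots,\mu^8$, read off the KKT stationarity conditions to obtain $\mu_r^0\omega_r=\sum_i\alpha_{ir}x_i$ and the bounds on $\sum_j(\mu_{ijr}^3+\mu_{ijr}^4)$ and $\sum_j(\mu_{ijr}^5+\mu_{ijr}^6)$, simplify the $\kappa^\ell$-terms via complementary slackness, and then introduce the epigraph variables $\Gamma_r$ to absorb the inner maxima. You are in fact more careful than the paper on two points it leaves implicit: the passage from the chain \eqref{q1} to the single-sided bounds $\|\omega_r\|\le\|\omega_1\|$ (which the paper simply writes into \eqref{spr} without comment), and the appeal to Slater's condition to secure strong duality for each convex block. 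Your flagged concern about whether a \emph{single} dual-feasible $\mu_r$ in the $\Gamma_r$-constraint recovers $\max_{\mu_r}\theta_r$ rather than a mere lower bound is exactly the delicate step the paper dispatches with the phrase ``the usual transformation of the maximum''; your proposed resolution via attainment of the dual optimum is the right way to close it.
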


\noindent \textit{Proof.}
In order to prove the result, in what follows we derive the optimality conditions of the Lagrangean dual of \eqref{spr} for $r=2,\ldots,m$. Relaxing the constraints of \eqref{spr} with dual multipliers $\mu_r\ge 0$, the Lagrangean function for given values of the binary variables $\bar t$, $\bar \xi$, $\bar h$ and $\bar z$ (and consequently for values $\bar u^+$, $\bar u^-$, $\bar q^+$ and $\bar q^-$) is:

\begin{eqnarray*}
\begin{split}
\mathcal{L}_r(\omega_r, \omega_{0r}, d, e; \mathbf{\mu}) &=   C_1 \dsum_{i=1}^n  e_{ir}+  C_2\dsum_{i=1}^n  d_{ir}  + \mu_r^0 \Big(\frac{1}{2} \|\omega_r\|^2-\frac{1}{2}\|\omega_1\|^2\Big)\\
&+\dsum_{i} \mu^1_{ir} \Big(-\omega_{r}^t x_i - \omega_{r0}- \kappa^1(t_{ir})\Big)+\dsum_{i} \mu^2_{ir} \Big(\omega_{r}^t x_i + \omega_{r0}- \kappa^2(t_{ir})\Big)\\
&+ \dsum_{i,j}\mu^3_{ijr} \Big(-\omega_{r}^t x_i - \omega_{r0} - e_{ir} - \kappa^3(\bar u^+_{ijr})\Big)\\
&+ \dsum_{i,j} \mu^4_{ijr} \Big( \omega_{r}^t x_i +\omega_{r0} -e_{ir}  - \kappa^3(\bar u^-_{ijr})\Big)\\
&+ \dsum_{i, j} \mu^5_{ijr} \Big(-d_{ir} -\omega^t_r x_i -\omega_{r0}- \kappa^3(\bar q_{ijr}^+)\Big)\\
& +\dsum_{i, j} \mu^6_{ijr} \Big(-d_{ir} + \omega^t_r x_i + \omega_{r0}- \kappa^4(\bar q_{ijr}^-)\Big)\\
& + \dsum_{i} \mu^7_{ir}(-d_{ir}) +  \dsum_{i} \mu^8_{ir}(-e_{ir}). 
\end{split}
\end{eqnarray*}

Therefore, the KKT optimality conditions read as:
\begin{itemize}
\item $\mu^0_r \omega_r= \dsum_{i=1}^n \Big (\mu_{ir}^1- \mu_{ir}^2 + \dsum_{j: y_i=y_j} (\mu_{ijr}^3- \mu_{ijr}^4 +\mu_{ijr}^5 - \mu_{ijr}^6 )\Big) x_i$.
\item $\dsum_{i=1}^n \Big( \mu_{ir}^1- \mu_{ir}^2 +\dsum_{j: y_i=y_j} (\mu_{ijr}^3- \mu_{ijr}^4 +\mu_{ijr}^5 - \mu_{ijr}^6 )\Big) =0$.
\item $\dsum_{j: y_i=y_j}(\mu_{ijr}^3 +  \mu_{ijr}^4) + \mu^8_{ir}  = C_1$, for all $i$.
\item $ \dsum_{j: y_i=y_j} (\mu_{ijr}^5 + \mu_{ijr}^6 ) + \mu^7_{ir} = C_2$, for all $i$.
\item $\mu_r \geq 0$.
\end{itemize}

First of all, we observe that if $\| \omega_r \| < \| \omega_1\|$ at optimality then $\mu_r^0=0$ and actually, we do not have to consider the corresponding constraint nor the addend
$\frac{\mu_r^0}{2}(\| \omega_r \|^2 - \| \omega_1 \|^2)$ in the Lagrangean function. Hence, denoting by $\alpha_{ir} = \mu_{ir}^1- \mu_{ir}^2 +\dsum_{j: y_i=y_j} (\mu_{ijr}^3- \mu_{ijr}^4+\mu_{ijr}^5 - \mu_{ijr}^6)$, for all $i=1, \ldots, n$, and assuming that $\mu^0_r \neq 0$, the dual of \eqref{spr} reads as:

\begin{align*}
 \max_{\omega_r, \omega_{r0}, d, e; \mathbf{\mu}} & -\dfrac{\mu^0_r}{2} \| \omega_1\|^2 + \dfrac{1}{2\mu^0_r} \dsum_{i,j=1}^n \alpha_{ir}\alpha_{jr} x_i^t x_j -\dsum_{i} \mu_{ir}^1 \kappa^1(\bar t_{ir}) -\dsum_{i} \mu_{ir}^2 \kappa^2(\bar t_{ir}) \\
 & - \dsum_{i} \mu^3_{ir} \kappa^3(\bar u_{ijr}^+) - \dsum_{i} \mu^4_{ir} \kappa^4(\bar u_{ijr}^-) \\
&-  \dsum_{i,j} \mu^5_{ijr} \kappa^5(\bar q_{ijr}^+)-  \dsum_{i,j} \mu^6_{ijr} \kappa^6(\bar q_{ijr}^-)\\
\mbox{s.t. }& \dsum_{i=1}^n \alpha_{ir} x_i = \mu^0_r \omega_r,\\
&\dsum_{i=1}^n \alpha_{ir} =0, \\
& \dsum_{j: y_i=y_j} (\mu_{ijr}^3 + \mu_{ijr}^4) \leq C_1, \forall i,\\
&\dsum_{j: y_i=y_j} (\mu_{ijr}^5 + \mu_{ijr}^6 ) \leq C_2, \forall i,\\
&\alpha_{ir} =  \mu_{ir}^1- \mu_{ir}^2 +\dsum_{i,j: u^+_{ij1}=0} \mu_{ijr}^3- \dsum_{i,j: u^-_{ij1}=0} \mu_{ijr}^4+\dsum_{i,j: q^+_{ij1}=0} \mu_{ijr}^5 \\
& - \dsum_{i,j: q^-_{ij1}=0}  \mu_{ijr}^6,\forall i,\\
& \mathbf{\mu_r} \geq 0.
\end{align*}

Let us simplify further the expressions above. We observe that:

$$
\dsum_{i} \mu^1_{ir} \kappa^1(\bar t_{ir})+\dsum_{i} \mu^2_{ir} \kappa^2(\bar t_{ir}) = 0,
$$
$$
-\dsum_{i,j} \mu^3_{ijr} \kappa^3(\bar u^+_{ijr}) - \dsum_{i,j} \mu^4_{ijr} \kappa^3(\bar u^-_{ijr}) = \dsum_{i,j:\atop \bar u^+_{ijr}=0} \mu^3_{ijr} +  \dsum_{i,j:\atop \bar u_{ijr}^-=0} \mu^4_{ijr},
$$
and
\begin{eqnarray*}
\begin{split}
-  \dsum_{i,j} \mu^5_{ijr} \kappa^3(\bar q_{ijr}^+) &-  \dsum_{i,j} \mu^6_{ijr} \kappa^4(\bar q_{ijr}^-) = \dsum_{i,j:\atop \bar q^+_{ijr}=0} \mu^5_{ijr} +   \dsum_{i,j:\atop  \bar q^-_{ijr}=0} \mu^6_{ijr}.
\end{split}
\end{eqnarray*}
Using those equations and substituting the problem becomes:

\begin{align}
 \max_{\omega_r, \omega_0, d, e; \mathbf{\mu}} & -\frac{\mu^0_r}{2} \|\omega_1 \|^2 + \dfrac{1}{2\mu^0_r} \dsum_{i,j=1}^n \alpha_{ir}\alpha_{jr} x_i^t x_j +\dsum_{i,j: u^+_{ijr}=0} \mu_{ijr}^3 + \dsum_{i,j: u^-_{ijr}=0} \mu_{ijr}^4\nonumber\\
 & +\dsum_{i,j: q^+_{ijr}=0} \mu_{ijr}^5+ \dsum_{i,j: q^-_{ijr}=0} \mu_{ijr}^6\nonumber\\
\mbox{s.t. }& \dsum_{i=1}^n \alpha_{ir} x_i = \mu^0_r \omega_r, \nonumber\\
&\dsum_{i=1}^n \alpha_{ir} =0, \forall r,\nonumber\\
&\dsum_{j: u^+_{ijr}=0} \mu_{ijr}^3 + \dsum_{j: u^-_{ijr}=0} \mu_{ijr}^4  \leq C_1, \forall i,\nonumber\\
&\dsum_{j: q^+_{ijr}=0} \mu_{ijr}^5+ \dsum_{j: q^-_{ijr}=0} \mu_{ijr}^6 \leq C_2, \forall i, r,\label{dspr}\tag{${\rm DSP}_r$}\\
& \mu_{ir}^1 =0, \forall i, r \mbox{ with } \bar t_{ir}=0,\nonumber\\
& \mu_{ir}^2 =0, \forall i, r \mbox{ with } \bar t_{ir}=1,\nonumber\\
&\alpha_{ir} =  \mu_{ir}^1- \mu_{ir}^2 +\dsum_{i,j: u^+_{ij1}=0} \mu_{ijr}^3- \dsum_{i,j: u^-_{ij1}=0} \mu_{ijr}^4+\dsum_{i,j: q^+_{ij1}=0} \mu_{ijr}^5\\
& - \dsum_{i,j: q^-_{ij1}=0}  \mu_{ijr}^6,\forall i,\nonumber\\
& \mathbf{\mu} \geq 0.\nonumber
\end{align}

Using the strong duality in all the subproblems \eqref{spr} for $r=2,\ldots,m$, we can obtain the following expansion of the join subproblem \eqref{spr} that allows one the evaluation of $\Phi$ defined in problem \eqref{evalfi}.

\begin{align}
\hat\Phi( h, z, t,\xi) = & \min \left\{ \frac{1}{2} \|\omega_1\|^2+ C_1 \sum_{i=1}^n e_{i1}+C_2\sum_{i=1}^n d_{i1})  \right. \nonumber \\
 & + \max_{\omega_r, \omega_0, d, e; \mathbf{\mu}} \sum_{r=2}^m \left(-\frac{\mu^0_r}{2} \|\omega_1 \|^2 + \dfrac{1}{2\mu^0_r} \dsum_{i,j=1}^n \alpha_{ir}\alpha_{jr} x_i^t x_j
 +\dsum_{i,j: u^+_{ijr}=0} \mu_{ijr}^3\right. \\
 &\left.+ \dsum_{i,j: u^-_{ijr}=0} \mu_{ijr}^4 +\dsum_{i,j: q^+_{ijr}=0} \mu_{ijr}^5+ \dsum_{i,j: q^-_{ijr}=0} \mu_{ijr}^6\right)\nonumber\\
& \mbox{s.t. }  -\omega_{1}^tx_i - \omega_{10} \leq \kappa^1(t_{i1}), \forall i,\nonumber\\
& \omega_{1}^tx_i + \omega_{10}  \leq  \kappa^2(t_{i1}), \forall i, \nonumber\\
&-\omega_{1}^t x_i - \omega_{10} - e_{i1} \leq  \kappa^3(u_{ij1}^+), \forall i,j,\nonumber\\
 &\omega_{1}^t x_i +\omega_{10} -e_{i1} \leq  \kappa^3(u_{ij1}^-), \forall i,j, r, \nonumber\\
 & -\omega^t_1 x_i -\omega_{10} -d_{i1}\leq \kappa^3(q_{ij1}^+),\forall i,j (y_i=y_j), \nonumber\\
 & \omega^t_1 x_i + \omega_{10}-d_{i1}  \leq \kappa^3(q_{ij1}^-), \forall i,j (y_i=y_j), \nonumber\\
& -d_{i1} \leq 0, \forall i, \nonumber\\
& -e_{i1}\leq 0, \forall i,\nonumber\\
&  \mathbf{\omega}_1 \in \R^d, \omega_{10} \in \R,\nonumber\\
&\dsum_{j: u^+_{ijr}=0} \mu_{ijr}^3 + \dsum_{j: u^-_{ijr}=0} \mu_{ijr}^4  \leq C_1, \forall i,\nonumber\\
&\dsum_{j: q^+_{ijr}=0} \mu_{ijr}^5+ \dsum_{j: q^-_{ijr}=0} \mu_{ijr}^6 \leq C_2, \forall i, r,\nonumber\\
& \mu_{ir}^1 =0, \forall i, r \mbox{ with } \bar t_{ir}=0,\nonumber\\
& \mu_{ir}^2 =0, \forall i, r \mbox{ with } \bar t_{ir}=1,\nonumber\\
&\alpha_{ir} =  \mu_{ir}^1- \mu_{ir}^2 +\dsum_{i,j: u^+_{ij1}=0} \mu_{ijr}^3- \dsum_{i,j: u^-_{ij1}=0} \mu_{ijr}^4+\dsum_{i,j: q^+_{ij1}=0} \mu_{ijr}^5 \\
&- \dsum_{i,j: q^-_{ij1}=0}  \mu_{ijr}^6,\forall i,\nonumber\\
& \mathbf{\mu} \geq 0.\nonumber
\end{align}
The usual transformation of the maximum in the objective function gives rise to the equivalent reformulation of the above problem as (\ref{jspf}).
\hfill $\square $


\begin{thebibliography}{10}
\providecommand{\url}[1]{{#1}}
\providecommand{\urlprefix}{URL }
\expandafter\ifx\csname urlstyle\endcsname\relax
  \providecommand{\doi}[1]{DOI~\discretionary{}{}{}#1}\else
  \providecommand{\doi}{DOI~\discretionary{}{}{}\begingroup
  \urlstyle{rm}\Url}\fi

\bibitem{Agarwal_2018}
Agarwal, N., Balasubramanian, V.N., Jawahar, C.: Improving multiclass
  classification by deep networks using dagsvm and triplet loss.
\newblock Pattern Recognition Letters  (2018).
\newblock \doi{10.1016/j.patrec.2018.06.034}.
\newblock \urlprefix\url{http://dx.doi.org/10.1016/j.patrec.2018.06.034}

\bibitem{Allwein}
Allwein E.L.,  Schapire R.E., and Singer Y. (2001). Reducing multiclass to binary. Reducing multiclass to binary: a unifying approach for margin classifiers.
\newblock The Journal of Machine Learning Research, 1:113-141.


\bibitem{bagirov} Bagirov, A. M., Ugon, J., Webb, D., Ozturk, G.,  Kasimbeyli, R. (2013). A novel piecewise linear classifier based on polyhedral conic and max–min separabilities. TOP 21(1), 3-24.

\bibitem{writing}
Bahlmann, C., Haasdonk, B., Burkhardt, H.: On-line handwriting recognition with
  support vector machines " a kernel approach.
\newblock In: Proceedings of the Eighth International Workshop on Frontiers in
  Handwriting Recognition (IWFHR'02), IWFHR '02, pp. 49--. IEEE Computer
  Society, Washington, DC, USA (2002).
\newblock \urlprefix\url{http://dl.acm.org/citation.cfm?id=851040.856840}

\bibitem{benders}
Benders, J.F.: Partitioning procedures for solving mixed-variables programming
  problems.
\newblock Numerische mathematik \textbf{4}(1), 238--252 (1962)


\bibitem{bennet-demiriz}
Bennett, K. P.,  Demiriz, A. (1999). Semi-supervised support vector machines. Advances in Neural Information processing systems 11, 368--374.


\bibitem{BPH14} Blanco, V.,  Ben Ali, S. and Puerto, J. (2014). Revisiting several problems and algorithms in Continuous Location  with $l_p$ norms. Computational Optimization and Applications 58(3): 563-595.

\bibitem{BPS18} Blanco, V., Puerto, J., Salmerón, R. (2018). Locating hyperplanes to fitting set of points: A general framework. Computers \& Operations Research, 95, 172-193.

\bibitem{BPR18} Blanco, V., Puerto, J., and Rodr\'iguez-Ch\'ia, A. M. (2017). On $\ell_p $-Support Vector Machines and Multidimensional Kernels. arXiv preprint arXiv:1711.10332.

\bibitem{BJP18}
Blanco, V., Japón, A., Puerto, J. (2018). Optimal arrangements of hyperplanes for multiclass classification. arXiv preprint arXiv:1810.09167.

\bibitem{genSVM}
van den Burg G.J.J. and Groenen P.J.F. (2016). GenSVM: A Generalized Multiclass Support Vector Machine.
\newblock Journal of Machinea Learning Research 17(225):1--42, 2016.

\bibitem{cortesvapnik95}
Cortes, C., Vapnik, V.: Support-vector networks.
\newblock Machine learning \textbf{20}(3), 273--297 (1995)

\bibitem{knn1}
Cover, T., Hart, P.: Nearest neighbor pattern classification.
\newblock IEEE transactions on information theory \textbf{13}(1), 21--27 (1967)

\bibitem{cs}
Crammer, K., Singer, Y.: On the algorithmic implementation of multiclass
  kernel-based vector machines.
\newblock Journal of Machine Learning Research \textbf{2}, 265--292 (2001)


\bibitem{Bakiri}
Dietterich T.G. and Bakiri G. (1995). Solving multiclass learning problems via error-correcting output codes.
\newblock Journal of Artificial Intelligence Research, 2:263-286.



\bibitem{geoffrion} Geoffrion, A. M. (1972). Generalized Benders decomposition. Journal of Optimization Theory and Applications, 10(4), 237--260.
\bibitem{ghaddar18} Ghaddar, B., and Naoum-Sawaya, J. (2018). High dimensional data classification and feature selection using support vector machines. European Journal of Operational Research, 265(3), 993-1004.

\bibitem{Guermeur}
Y. Guermeur Y. and E. Monfrini E. (2011). A quadratic loss multi-class SVM for which a radius-margin bound applies.
\newblock Informatica, 22(1):73-96.


\bibitem{credit}
Harris, T.: Quantitative credit risk assessment using support vector machines:
  Broad versus narrow default definitions.
\newblock Expert Systems with Applications \textbf{40}(11), 4404--4413 (2013)


\bibitem{horn} Horn, D., Demircioglu, A., Bischl, B., Glasmachers, T., and Weihs, C. (2016). \emph{A comparative study on large scale kernelized support vector machines}. Advances in Data Analysis and Classification, 1-17.

\bibitem{IkedaMurata05a} K.~Ikeda and N.~Murata (2005). \emph{Geometrical Properties of Nu Support Vector Machines with Different Norms}. Neural Computation 17(11), 2508-2529.

\bibitem{IkedaMurata05b} K.~Ikeda and N.~Murata (2005). \emph{Effects of norms on learning properties of support vector machines}. ICASSP (5), 241-244


\bibitem{insurance}
Ka{\v{s}}{\'c}elan, V., Ka{\v{s}}{\'c}elan, L., Novovi{\'c}~Buri{\'c}, M.: A
  nonparametric data mining approach for risk prediction in car insurance: a
  case study from the montenegrin market.
\newblock Economic research-Ekonomska istra{\v{z}}ivanja \textbf{29}(1),
  545--558 (2016)

\bibitem{MSVMpack} Lauer F., Guermeur Y. (2011). MSVMpack: a Multi-Class Support Vector Machine Package, Journal of Machine Learning Research, \textbf{12}, 2269--2272.

\bibitem{lee2004}
Lee, Y., Lin, Y., Wahba, G.: Multicategory support vector machines: Theory and
  application to the classification of microarray data and satellite radiance
  data.
\newblock Journal of the American Statistical Association \textbf{99}(465),
  67--81 (2004)

\bibitem{nb}
Lewis, D.D.: Naive (bayes) at forty: The independence assumption in information
  retrieval.
\newblock In: European conference on machine learning, pp. 4--15. Springer
  (1998)

\bibitem{uci}
Lichman, M.: {UCI} machine learning repository (2013).
\newblock \urlprefix\url{UCI Machine Learning Repository}

\bibitem{LMC18} L\'opez, J., Maldonado, S., and Carrasco, M. (2018). Double regularization methods for robust feature selection and SVM classification via DC programming. Information Sciences, 429, 377-389.

\bibitem{LMR18} Labb\'e, M., Mart\'inez-Merino, L. I., and Rodr\'iguez-Ch\'ia, A. M. (2018). Mixed Integer Linear Programming for Feature Selection in Support Vector Machine. Discrete Applied Mathematics, https://doi.org/10.1016/j.dam.2018.10.025.

\bibitem{cancer}
Majid, A., Ali, S., Iqbal, M., Kausar, N.: Prediction of human breast and colon
  cancers from imbalanced data using nearest neighbor and support vector
  machines.
\newblock Computer methods and programs in biomedicine \textbf{113}(3),
  792--808 (2014)
\bibitem{labbe14} Maldonado, S., P\'erez, J., Weber, R., Labb\'e, M. (2014). Feature selection for support vector machines via mixed integer linear programming. Information sciences, 279, 163-175.
\bibitem{mangasarian}  Mangasarian, O.L. \emph{Arbitrary-norm separating plane}. Oper. Res. Lett., 24 (1--
2):15--23 (1999).

\bibitem{martinez2000} Mart\'inez, D., Millerioux, G. (2000). Support vector committee machines. European Symposium on Artificial Neural Networks-ESSANN'2000.

\bibitem{e1071} Meyer, D., Dimitriadou, E., Hornik, K., Weingessel, A. and
   Leisch, F.
   \newblock e1071: Misc Functions of the Department of
  Statistics, Probability Theory Group (Formerly: E1071), TU Wien. R package
  version 1.6-8. \url{https://CRAN.R-project.org/package=e1071} (2017)

\bibitem{ortigosa} Ortigosa-Hern\'andez, J., Inza, I., and Lozano, J. A. (2016). Semisupervised multiclass classification problems with scarcity of labeled data: A theoretical study. IEEE transactions on neural networks and learning systems, 27(12), 2602-2614.

\bibitem{python} Pedregosa, F., Varoquaux, G. , Gramfort, A., Michel, V., Thirion, B., Grisel, O., Blondel, M.,  Prettenhofer, P., Weiss, R., Dubourg, V., Vanderplas, J., Passos, A., Cournapeau, D., Brucher, M., Perrot, M. and Duchesnay, E.,
\newblock Scikit-learn: Machine Learning in Python. Journal of Machine Learning Research 12, 2825--2830, 2011.

\bibitem{DAGSVM}
Platt J.C., Cristianini N., and Shawe-Taylor J. (2000). Large margin DAGs for multiclass classification. In S.A. Solla, T.K. Leen, and K.Müller, editors, Advances in Neural Information Processing Systems 12, pages 547-553. MIT Press.

\bibitem{cleveland}
Radhimeenakshi, S.: Classification and prediction of heart disease risk using
  data mining techniques of support vector machine and artificial neural
  network.
\newblock In: Computing for Sustainable Global Development (INDIACom), 2016 3rd
  International Conference on, pp. 3107--3111. IEEE (2016)

\bibitem{knn}
Tang, X., Xu, A.: Multi-class classification using kernel density estimation on
  k-nearest neighbours.
\newblock Electronics Letters \textbf{52}(8), 600--602 (2016)

\bibitem{uney-turkay} \"Uney, F., T\"urkay, M. (2006). A mixed-integer programming approach to multi-class data classification problem. European journal of operational research, 173(3), 910-920.

\bibitem{ww}
Weston, J., Watkins C. (1999). Support vector machines for multi-class pattern
  recognition.
\newblock In: European Symposium on Artificial Neural Networks, pp. 219--224.




\end{thebibliography}
\end{document}